\documentclass[a4paper,10pt]{amsart}
\usepackage[arrow,matrix]{xy}
\usepackage{amsmath,amssymb,amscd,bbm,amsthm,mathrsfs,dsfont}
\theoremstyle{plain}
\newtheorem{theorem}{Theorem}[section]
\newtheorem{corollary}[theorem]{Corollary}
\newtheorem{lemma}[theorem]{Lemma}
\newtheorem{prop}[theorem]{Proposition}
\theoremstyle{definition}
\newtheorem{defn}[theorem]{Definition}
\newtheorem{exa}[theorem]{Example}

\theoremstyle{remark}

\newtheorem{remark}{Remark}[section]

\DeclareMathOperator{\Ext}{Ext} \DeclareMathOperator{\Hom}{Hom}

\begin{document}
\title[Koszulity of algebras with non-pure resolutions]
{\bf Koszulity of algebras with non-pure resolutions}

\author{Di-Ming Lu}
\address{(Lu) Department of Mathematics, Zhejiang University, Hangzhou 310027, China}
\email{dmlu@zju.edu.cn}
\author{Jun-Ru Si}
\address{(S\,i) Department of Mathematics, Zhejiang University, Hangzhou 310027, China}
\email{sijunru@126.com}

\keywords{$N$-Koszul algebra, piecewise-Koszul algebra, AS-regular
algebra}
\thanks {The work was supported by the NSFC (Grant No. 10571152) and partially by
the NSF of Zhejiang Province of China (Grant No. J20080154)}
\subjclass[2000]{16E05, 16E40, 16S37, 16W50.}
\date{}
\maketitle

\begin{abstract}
We discuss certain homological properties of graded algebras whose
trivial modules admit non-pure resolutions. Such algebras include
both of Artin-Schelter regular algebras of types (\emph{12221}) and
(\emph{13431}). Under certain conditions, a module with non-pure
resolution is decomposed to form an extension by two modules with
pure resolutions.
\end{abstract}

\vskip7mm
\section*{Introduction}

Consider a (connected) graded algebra $A=k\oplus A_1\oplus A_2\oplus
\cdots$. Let $V\subset A$ be a minimal graded vector space which
generates $A$, and $R\subset T\langle V \rangle$ the minimal graded
vector space which generates the relations of $A$. Then $A\cong
T\langle V\rangle/(R)$ where $(R)$ is the ideal generated by $R$,
and the start of a minimal graded free resolution of the trivial
left $A$-module $_Ak$ is
$$
\cdots \to A\otimes R\to A\otimes V \to A\to k\to 0.
$$
More information is lurking in the subsequent terms of the
resolution, and one may expect to understand for what they are
standing by examining certain simple resolutions.

Perhaps the linear resolutions are the most suitable expecting. A
well-known such example is {\it Koszul algebras\/}, introduced by
Priddy in \cite{P}. There are some generalizations on Koszul
algebras. For example, {\it $N$-Koszul algebras\/}, motivated by the
cubic Artin-Schelter regular algebras (AS-regular algebras, for
short), were introduced by Berger in \cite{Be}, recent developing on
Calabi-Yau algebras offered a new class of $N$-Koszul algebras
(\cite{BT, Bo}). {\it Almost Koszul algebras\/}, grew out of finding
periodic resolutions for the trivial extension algebras of the path
algebras of Dynkin quivers, were defined in \cite{BBK}. Several
progresses have been given since then, such as {\it stacked monomial
algebras\/} in \cite{GS} where the authors pointed out that all
monomial algebras with pure resolutions are stacked monomial
algebras, and {\it piecewise-Koszul algebras\/} in \cite{LHL}. Note
that all the objects above bear with {\it pure\/} resolutions,
purity becomes a powerful homological tool. Our starting idea for
writing this article is trying to understand the resolutions in some
{\it non-pure\/} case.

There is an alternate generalization of the notion by using the
duality of Koszul algebras. Denote $E(A)$ the Ext-algebra of $A$,
Cassidy and Shelton recently introduced the notion of {\it
$\mathcal{K}_2$ algebra \/}in \cite{CS}, for a connected graded
algebra, by defining its Ext-algebra $E(A)=(E^1(A), E^2(A))$ as an
algebra. Both Koszul algebras and $N$-Koszul algebras fall into the
class of $\mathcal{K}_2$ algebras. On the other hand, a
piecewise-Koszul algebra, determined by a pair of parameters $(p,
N)$ ($2\le p\le N$), is $\mathcal{K}_2$ if and only if the period
$p=2$ or $p=N$. Another remarkable example of $\mathcal{K}_2$
algebras is the AS-regular algebras of type (\emph{13431}), and yet
the AS-regular algebras of type (\emph{12221}) are not
$\mathcal{K}_2$.

The AS-regular algebras have been studied in many recent papers. In
particular, four families of non-Koszul regular algebras were
constructed in \cite{LPWZ} by using the theory of
$A_\infty$-algebras. In that paper, the AS-regular algebras of
global dimension 4 generated in degree 1 are classified into the
following three types according to the number of generators:

\begin{itemize}
{\small
\item [(\emph{12221})]\quad $0\rightarrow A(-7)\rightarrow A(-6)^{\oplus2}\rightarrow
A(-4)\oplus A(-3) \rightarrow A(-1)^{\oplus2}\rightarrow A\rightarrow  k\rightarrow 0$,
\item [(\emph{13431})]\quad $0\rightarrow
A(-5)\rightarrow A(-4)^{\oplus3}\rightarrow A(-3)^{\oplus2}\oplus
A(-2)^{\oplus2}\rightarrow A(-1)^{\oplus3}\rightarrow A\rightarrow
k\rightarrow 0$,
\item [(\emph{14641})]\quad $0\rightarrow A(-4)\rightarrow
A(-3)^{\oplus4}\rightarrow A(-2)^{\oplus6}\rightarrow
A(-1)^{\oplus4}\rightarrow A\rightarrow k\rightarrow 0$.
}
\end{itemize}

The algebras of type (\emph{14641}) are Koszul. Our goal in this
article is to introduce a new class of Koszul-type algebras, called
{\it bi-Koszul algebras\/}, which includes both AS-regular algebras
of types (\emph{12221}) and (\emph{13431}) as its objects. Such
algebras are defined as non-pure analogue of the piecewise-Koszul
algebras (see Definition \ref{bk}).

\medskip

We can connect the theory of the bi-Koszul algebras and their Koszul
duality with the following, to be proved in Section \ref{sec}:
\begin{theorem}\label{thm0} $A$ is a bi-Koszul algebra if and only if its Koszul
dual $E(A)$ begins with $E^1(A)= E^1_1(A)$, $E^2(A)=
E^2_d(A)\oplus E^2_{d+1}(A)$, $E^3(A)= E^3_{2d}(A)$, and
$E^{3n}(A)= (E^3(A))^n$, $E^{3n+1}(A)= E^1(A)E^{3n}(A)$,
$E^{3n+2}(A)\cong E^2(A)E^{3n}(A)\oplus \mathsf{E}^2_{2nd+d+1}
(J\Omega^{3n}(k))$ as $E^0(A)$-modules.
\end{theorem}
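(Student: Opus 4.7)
The plan is to translate the whole statement through the minimal graded projective resolution $P_\bullet\to k$ of the trivial left module, via the dictionary $E^i_j(A)\cong \Hom_A(P_i,k)_j$, so that the bigraded decomposition of $E(A)$ records precisely the degrees in which the syzygy $\Omega^i(k)$ acquires minimal generators. The bi-Koszul condition prescribed by Definition \ref{bk} fixes, period by period, these generator degrees: $P_{3n}$ is concentrated in degree $2nd$, $P_{3n+1}$ in degree $2nd+1$, and $P_{3n+2}$ in the two degrees $2nd+d$ and $2nd+d+1$. Consequently the base statements $E^1(A)=E^1_1(A)$, $E^2(A)=E^2_d(A)\oplus E^2_{d+1}(A)$ and $E^3(A)=E^3_{2d}(A)$ fall out immediately, and the real content of the theorem concerns the multiplicative and modular structure for $n\geq 1$.

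First I would handle $(\Rightarrow)$ by induction on $n$. Assuming $A$ is bi-Koszul, I would show that the Yoneda products $E^3(A)\otimes E^{3n}(A)\to E^{3(n+1)}(A)$ and $E^1(A)\otimes E^{3n}(A)\to E^{3n+1}(A)$ are surjective. Since image and target both sit in a single pure degree, minimality of the resolution forces these surjections to be isomorphisms, yielding $E^{3(n+1)}(A)=(E^3(A))^{n+1}$ and $E^{3n+1}(A)=E^1(A)E^{3n}(A)$. Concretely, splicing a low-degree segment of $P_\bullet$ onto the syzygy $\Omega^{3n}(k)$ produces generators of $\Omega^{3(n+1)}(k)$ in the pure degree $2(n+1)d$, and similarly for the $+1$ shift.

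For $(\Leftarrow)$ the same dictionary runs backwards: given the prescribed bigraded and multiplicative structure of $E(A)$, I would read off, period by period, the generator degrees of each $P_i$ and verify that they match the bi-Koszul pattern. The hypothesis $E^{3n}(A)=(E^3(A))^n$ forces $\Omega^{3n}(k)$ to be minimally generated in pure degree $2nd$, and combined with $E^{3n+1}(A)=E^1(A)E^{3n}(A)$ this propagates the generator-degree prediction one period forward. The non-pure generator degrees of $\Omega^{3n+2}(k)$ are then extracted from the direct sum decomposition of $E^{3n+2}(A)$, closing the induction.

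The hard part will be the analysis of $E^{3n+2}(A)$, where the Yoneda image $E^2(A)E^{3n}(A)$ does not exhaust the target: one must identify the residual summand with $\mathsf{E}^2_{2nd+d+1}(J\Omega^{3n}(k))$ and show that the decomposition is a genuine direct sum, not merely an extension. My plan is to apply $\Hom_A(-,k)$ to the short exact sequence $0\to J\Omega^{3n}(k)\to\Omega^{3n}(k)\to\Omega^{3n}(k)/J\Omega^{3n}(k)\to 0$ and chase the resulting long exact sequence in $\Ext$; purity of $\Omega^{3n}(k)$ in degree $2nd$, combined with minimality, should force the connecting homomorphism to split off the extra contribution in degree $2nd+d+1$. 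Controlling this splitting, rather than settling for an abstract isomorphism of graded vector spaces, is the technical crux on which the whole theorem turns.
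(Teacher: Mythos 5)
Your plan follows essentially the same route as the paper's own proof: the dictionary between generator degrees of $P_i$ and the bigraded support of $E^i(A)$ (Lemma \ref{le}), Yoneda-product surjectivity on the pure strands $E^{3n}(A)$ and $E^{3n+1}(A)$ (Lemma \ref{gmmz} and Proposition \ref{propp}), and the long exact sequence obtained by applying $\underline{\Ext}_A^*(-,k)$ to $0\to J\Omega^{3n}(k)\to \Omega^{3n}(k)\to \Omega^{3n}(k)/J\Omega^{3n}(k)\to 0$ together with a second-degree comparison for the non-pure strand (Proposition \ref{prop3} and Lemma \ref{le2}). The one point you overestimate is the ``technical crux'' of the splitting: the theorem only asserts an isomorphism of $E^0(A)=k$-modules, and since $k$ is semisimple the short exact sequence $0\to E^{3n+2}_{2nd+d+1}(A)\mathsf{E}^0(M)\to \mathsf{E}^{3n+2}_{2nd+d+1}(M)\to \mathsf{E}^{3n+2}_{2nd+d+1}(JM)\to 0$ splits automatically, so no control of the connecting map is needed.
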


We consider a strongly bi-Koszul algebra $A$ with its Ext-algebra
$E(A)$ being generated by $E^i(A)$ ($i=0, 1, 2, 3$), and give a
construction of such algebras.

\medskip
The notion of (strongly) bi-Koszul modules is introduced in Section
\ref{sect}. One of main homological properties of bi-Koszul modules
is (see the theorems \ref{thmm1} and \ref{thm6}):
\begin{theorem}
Let $A$ be a bi-Koszul algebra and $M\in Gr_0(A)$. We have:
\begin{enumerate}
\item If $\mathsf{E}(M)$ is generated by $\mathsf{E}^0(M)$, then $M$ is a bi-Koszul module.
\item If $M$ is a strongly bi-Koszul module, then $\mathsf{E}(M)$ is
generated by $\mathsf{E}^0(M)$.
\end{enumerate}
\end{theorem}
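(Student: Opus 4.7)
The plan is to mimic in the module setting the line of argument that proves Theorem~\ref{thm0}, where the shape of the minimal resolution of ${}_Ak$ is read off from a generation hypothesis on $E(A)$. Both parts proceed by induction on homological degree, using the short exact sequences
$$0\to\Omega^{n+1}(M)\to A\otimes V_n(M)\to\Omega^n(M)\to 0$$
extracted from the minimal resolution of $M$ (with $V_n(M)=\Omega^n(M)/J\Omega^n(M)$ the degree-zero piece), together with the three-periodic internal-degree pattern of $E(A)$ supplied by Theorem~\ref{thm0}.

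For part (1), the hypothesis says that every element of $\mathsf{E}^n(M)$ is obtained by multiplying an element of $E^n(A)$ into $\mathsf{E}^0(M)$. Since $E^n(A)$ is concentrated in the specific internal degrees dictated by Theorem~\ref{thm0}, and since Yoneda multiplication respects the internal grading, the internal grading of $\mathsf{E}^n(M)$, and hence of $V_n(M)$, must follow the same non-pure periodic pattern: a single degree when $n\equiv 0,1\pmod 3$ and two prescribed degrees when $n\equiv 2\pmod 3$. Reading these off as the top of $\Omega^n(M)$ gives exactly the numerical conditions required by the definition of a bi-Koszul module; I would verify the three residue classes modulo $3$ in turn, checking at each step that no extra generator of $\Omega^n(M)$ can appear.

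For part (2), I would run the induction in the opposite direction. Starting from a strongly bi-Koszul $M$, the top of $\Omega^n(M)$ already lies in the prescribed internal degrees, so the multiplication map $E^n(A)\otimes_k\mathsf{E}^0(M)\to\mathsf{E}^n(M)$ has the correct degree source. To see it is surjective I would invoke the ``strongly'' hypothesis to split $\Omega^{3n+2}(M)$ as a sum of a main piece reached from $E^2(A)\mathsf{E}^{3n}(M)$ and a residual piece controlled by $\mathsf{E}^2_{2nd+d+1}(J\Omega^{3n}(M))$, exactly in parallel with the algebra statement in Theorem~\ref{thm0}; applying $E^3(A)$ to cross the boundary $3n+2\to 3(n+1)$ then reduces the surjectivity claim at level $3(n+1)$ to the inductive hypothesis at level $3n$.

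The main obstacle lies at the mixed homological degrees $n\equiv 2\pmod 3$, where the Ext module picks up the non-pure term $\mathsf{E}^2_{2nd+d+1}(J\Omega^{3n}(M))$. Showing in part (2) that this extra component is actually hit by multiplication from $\mathsf{E}^0(M)$, rather than needing its own generators, is precisely what the strongly bi-Koszul hypothesis is designed to guarantee; conversely, showing in part (1) that the abstract generation hypothesis still pins down this extra component is the only place where the absence of purity is felt, and this is the reason the weaker (non-strong) bi-Koszul conclusion is the best one can extract in that direction.
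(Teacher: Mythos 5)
Your part (1) is essentially the paper's own argument: from $\mathsf{E}^n(M)=E^n(A)\mathsf{E}^0(M)$ and the fact that $E^n(A)$ is supported in $\varDelta(n)$ while $\mathsf{E}^0(M)$ sits in internal degree $0$, one concludes that $\mathsf{E}^n(M)$ is supported in $\varDelta(n)$, and Lemma \ref{le} converts this into the required degree condition on $Q_n$. That half is correct and matches the paper.

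Part (2) has a genuine gap: you never engage with the actual definition of a \emph{strongly} bi-Koszul module, namely the radical condition $J\Omega^{3n+2}(M)=J\Omega^{3n+2}(M/JM)\cap\Omega^{3n+2}(M)$, and instead treat ``strongly'' as if it directly asserted that the residual term is absorbed --- which is essentially the statement to be proved. To locate the real work: applying $\underline{\Ext}^*_A(-,k)$ to $0\to JM\to M\to M/JM\to 0$ and using Lemma \ref{le2}, generation of $\mathsf{E}(M)$ in degree $0$ is equivalent to surjectivity of $\mathsf{E}^n(M/JM)\to\mathsf{E}^n(M)$ for every $n$; the degree count in Proposition \ref{prop3} shows this surjectivity is automatic at all pure levels and in internal degree $2nd+d$ at level $3n+2$, so the only obstruction is $\mathsf{E}^{3n+2}_{2nd+d+1}(JM)$. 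Hence your induction ``applying $E^3(A)$ to cross the boundary'' is not where the difficulty lies; the problem is local at each level $3n+2$ and amounts to controlling the minimal resolution of $JM$. The paper does this by propagating the exact sequence $0\to\Omega^n(M)\to\Omega^n(M/JM)\to\Omega^{n-1}(JM)\to 0$ through all $n$ with minimal projective covers, via Lemma \ref{le4}; the hypothesis $JK=K\cap JM$ needed at each stage is free from Lemma \ref{le5} at the pure steps $n\not\equiv 2\ (\mathrm{mod}\ 3)$ and is precisely the strongly hypothesis at the non-pure steps $n\equiv 2\ (\mathrm{mod}\ 3)$. The conclusion is that $\Omega^{n-1}(JM)$ is generated in degrees $\varDelta(n)$, so $\Omega^2(JM)(-2d)$ is itself a bi-Koszul module and the obstruction vanishes; this is Theorem \ref{thm6} combined with Theorem \ref{thmm1}(2). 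Without some version of this syzygy-by-syzygy comparison, the claim that the strongly hypothesis ``guarantees'' the vanishing of $\mathsf{E}^{3n+2}_{2nd+d+1}(JM)$ is an assertion, not a proof.
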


The resolutions of bi-Koszul modules are non-pure, which results an
obstruction in the study of homological properties. A natural
question is: what non-pure resolutions can be decomposed into pure
resolutions. We discuss the question for a kind of decomposable
modules in Section \ref{secti}.

\begin{theorem} Let $A$ be a connected noetherian graded algebra, and
$M$ a decomposable bi-Koszul module (see Section \ref{secti}). Then
the resolution $\mathcal{Q}$ of $M$ can be decomposed into a direct
sum of two pure resolutions. Moreover, there exists a short exact
sequence $0\rightarrow M'\rightarrow M\rightarrow M''\rightarrow0$
such that both $M'$ and $ M''$ are $ \delta$-Koszul modules.
\end{theorem}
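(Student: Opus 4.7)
The plan is to exploit the decomposability hypothesis to split the bi-Koszul resolution of $M$ as a direct sum of two complexes, each pure, and then to recover the two modules $M'$ and $M''$ as the zeroth homologies of the pieces. To carry this out, I would first write down the minimal graded free resolution $\mathcal{Q}\colon \cdots \to Q_n \to \cdots \to Q_0 \to M \to 0$. By bi-Koszulity, each $Q_n$ is generated either in a single degree (at positions $n \equiv 0,1 \pmod 3$) or in exactly two consecutive degrees (at positions $n \equiv 2 \pmod 3$), as dictated by the pattern of Theorem \ref{thm0}. The decomposability assumption of Section \ref{secti} then provides, at every $n$, a term-wise splitting $Q_n = Q_n' \oplus Q_n''$ that sends the ``lower'' generating stratum to $Q_n'$ and the ``upper'' one to $Q_n''$.

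The first substantive step is to verify that each differential $d_n$ respects this splitting. Combining the degree bookkeeping of the bi-Koszul pattern with minimality, one checks that the off-diagonal components $Q_n' \to Q_{n-1}''$ and $Q_n'' \to Q_{n-1}'$ must vanish; the noetherian hypothesis on $A$ enters to keep the relevant graded $\Hom$ spaces under control so that this degree comparison is clean. This produces $\mathcal{Q} = \mathcal{Q}' \oplus \mathcal{Q}''$ as complexes of graded $A$-modules, in which each $\mathcal{Q}^{(\star)}$ is pure. Since direct sums commute with homology, both subcomplexes are exact in positive degrees and hence serve as the minimal pure resolutions of $M' := \mathrm{coker}(d_1|_{Q_1'})$ and $M'' := \mathrm{coker}(d_1|_{Q_1''})$. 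Purity of these resolutions is precisely the definition of $\delta$-Koszulity for the two values of $\delta$ determined by the two strata, and the identification $M \cong M' \oplus M''$ furnishes the asserted (split) short exact sequence $0 \to M' \to M \to M'' \to 0$.

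I expect the main obstacle to be exactly the middle step: showing that the differentials preserve the term-wise decomposition. Even with the degree separation built into the bi-Koszul pattern, there is no purely formal reason for the off-diagonal components of $d_n$ to vanish, since by minimality the ``wrong'' map primed $\to$ double-primed is \emph{a priori} allowed in positive internal degree. The precise formulation of ``decomposable'' in Section \ref{secti} is what must close this gap, presumably by requiring that the syzygies in the two strata can be chosen to generate independent subresolutions under the differential. Once this compatibility is established at a bi-degree position and propagated inductively through the two adjacent pure positions, the rest of the argument is routine bookkeeping.
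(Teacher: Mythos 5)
Your plan founders on exactly the step you flag as the main obstacle, and the way you propose to close the gap does not match what ``decomposable'' means in Section~\ref{secti}. First, degree bookkeeping does not force the off-diagonal components of the differentials to vanish: at the pure positions ($n\equiv 0,1 \pmod 3$) both strata of $Q_n$ and of $Q_{n-1}$ are generated in the \emph{same} internal degree, so every block of the matrix of $\partial_n$ is a priori allowed, and at the non-pure positions the lower-left block $U_m\colon Q_m''\to Q_{m-1}'$ has entries of positive degree and is generically nonzero. Second, the paper's definition of decomposable does \emph{not} require $U_m=0$. It requires the matrices $F_m=\left(\begin{smallmatrix} F_m' & 0\\ U_m & F_m''\end{smallmatrix}\right)$ to be \emph{admissible}, meaning the induced off-diagonal map $\lambda$ satisfies $\mathrm{Im}\,\lambda\subseteq\mathrm{Im}\,f'$, together with block-consistency of the products $F_{m+1}F_m$ and the normalization that $F_{3n+2}'$ has exactly $p_{3n+2}$ rows (which is what pins down the splitting at the pure positions, where degrees give no information). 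Consequently $\mathcal{Q}'$ is only a subcomplex of $\mathcal{Q}$ and $\mathcal{Q}''$ the corresponding quotient complex; the resolution does not split as a direct sum of complexes, and ``direct sums commute with homology'' is not available to you.

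The missing content is precisely the paper's Lemma~\ref{le8}: a diagram chase showing that, under admissibility, both the restricted complex $(Q_m',\,\partial_m|_{Q_m'})$ and the projected complex $(Q_m'',\,$ composite of $\partial_m|_{Q_m''}$ with the projection onto $Q_{m-1}'')$ are exact --- one proves $\mathrm{Im}\,f'=\mathrm{Im}\,f\cap N'$ and $\ker g''=\ker g\cap N''$ by explicitly correcting elements using $\mathrm{Im}\,\lambda\subseteq\mathrm{Im}\,f'$. Without this, your two pure complexes are not known to be exact, so they are not resolutions of anything. Relatedly, your conclusion $M\cong M'\oplus M''$ overshoots: the paper takes $M'=\varepsilon(Q_0')$ and $M''=M/M'$, yielding a short exact sequence that need not split. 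Your argument is valid only in the special case where all $U_m=0$, which is strictly narrower than the theorem's hypothesis (the paper's own example has $U_m=0$, but the definition deliberately allows more).
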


\vskip5mm
\section{Preliminaries}

Throughout this article, $\mathds{F}$ is a fixed field. We always
assume that $A=k\oplus A_1\oplus A_2\oplus\cdots$ is an
associated graded $\mathds{F}$-algebra, where $k$ is a semisimple
Artin algebra over $\mathds{F}$, $A_1$ is a finitely generated
$\mathds{F}$-module, and $A$ is generated in degree 0 and 1. The
graded Jacobson radical of $A$, denoted by $J$, is $J=A_1\oplus
A_2\oplus\cdots$. Let $Gr(A)$ denote the category of graded left
$A$-modules, and $gr(A)$ its full subcategory of finitely generated
$A$-modules. The morphisms in these categories, denoted by
$\Hom_{Gr(A)}(M, N)$, are graded $A$-module maps of degree zero. We
denote $Gr_0(A)$ and $gr_0(A)$ the full subcategories of $Gr(A)$ and
$gr(A)$ whose objects are generated in degree $0$, respectively.

Let $M\in Gr(A)$, we denote the $n^\mathrm{th}$ {\it shift\/} of $M$
by $M(n)$ where $M(n)_j=M_{j-n}$. In particular, for a graded
algebra $A$, we use the notation established in \cite{CS}, setting
$$A(m_1, \cdots, m_s):=A(m_1)\oplus \cdots \oplus A(m_s).$$

We write $\underline{\Ext}^*_A$ the derived functor of the graded
$\underline{\Hom}^*_A$ functor
$$
\underline{\Hom}^*_A(M, N):=\bigoplus_n \Hom_{Gr(A)}(M, N(n)),
$$
and denote
$$
E(A): = \underline{\Ext}_A^*(k, k),\quad  \quad
\mathsf{E}(M): = \underline{\Ext}_A^*(M, k),
$$
the {\it Koszul
dual\/} (or Ext-algebra) of the algebra $A$ and the {\it Koszul
dual\/} of the module $M\in Gr(A)$, respectively. $\mathsf{E}(M)$ is
a left $E(A)$-module by the Yoneda product as a bigraded space with
the $(i, j)^\textmd{th}$ component
$\mathsf{E}^i_j(M):=\underline{\Ext}_A^i(M, k)_j$. Similarly, $E(A)$
is a bigraded algebra with $E^0(A)=k$.

All modules bounded below in $Gr(A)$ have minimal graded projective
resolutions. Thus, we fix
$$
\mathcal{P}:\quad \cdots \rightarrow P_n \stackrel{d_n}
\rightarrow \cdots \rightarrow P_2\stackrel{d_2}\rightarrow
P_1\stackrel{d_1}\rightarrow P_0\stackrel{d_0}\rightarrow
k\rightarrow 0
$$
a minimal graded projective resolution of $k$ over $A$, where $\ker
d_n\subseteq JP_n$ for all $n\geq 0$. And assume
$$
\mathcal{Q}: \quad \cdots \rightarrow Q_n \stackrel{\partial_n}
\rightarrow \cdots \rightarrow Q_2\stackrel{\partial_2}\rightarrow
Q_1 \stackrel{\partial_1} \rightarrow
Q_0\stackrel{\partial_0}\rightarrow M\rightarrow 0
$$
is a minimal graded projective resolution of $M \in Gr(A)$, where
$\ker\partial_n\subseteq JQ_n$ for all $n\geq 0$. Denote
$\Omega^n(M)=\ker \partial_{n-1}$ the $n^\mathrm{th}$ {\it
syzygy\/} of $M$.

We say a graded module $N$ {\it supported\/} in $S$ if $N_i=0$ for
all $i\notin S$. Now, for each $n\geq 0$, $Q_n$ is graded by
internal (or second) degrees, we write $Q_n=\oplus_i Q_{n, i}$. Let
$Q_n$ be supported in $\{j \ |\ j \geq i\}$, then the following are
clear:
\begin{enumerate}
\item $\textsf{E}^n(M) = \underline{\Hom}_A(Q_n, k)
=\bigoplus_{j\geq i}\Hom_{Gr(A)}(Q_n, k(j))$;
\item there exists an integer $s\ (\geq i)$ such that for any $j\geq s,
\; Q_{n, j}=A_{j-s}Q_{n, s}$ if and only if $\Hom_{Gr(A)}(Q_n, k(j))=0$.
\end{enumerate}

From these facts, we have

\begin{lemma}\label{le}
$Q_n$ is generated in degrees $i_1, i_2, \cdots, i_l$
if and only if $\mathsf{E}^n(M)$ is supported in
$\{i_1, i_2, \cdots, i_l\}$. \qed
\end{lemma}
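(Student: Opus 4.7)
The plan is to compute $\mathsf{E}^n(M)=\underline{\Hom}_A(Q_n,k)$ directly from a minimal free presentation of $Q_n$. Since $\mathcal{Q}$ is a minimal graded projective resolution, each $Q_n$ is a graded free $A$-module and its generators correspond bijectively to a homogeneous basis of $Q_n/JQ_n$. Consequently, saying that $Q_n$ is generated in degrees $i_1,\dots,i_l$ is the same as fixing an isomorphism
$$
Q_n\;\cong\;\bigoplus_{r=1}^{l} A(-i_r)^{\oplus b_r}
$$
in $Gr(A)$ with each $b_r\ge 1$. I would take this as the concrete object to test against $\underline{\Hom}_A(-,k)$.

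The computational core is the elementary identity
$$
\Hom_{Gr(A)}(A(-t),k(j))=k(j)_t=k_{t-j},
$$
which vanishes unless $t=j$ and otherwise is the one-dimensional $k$-space. Plugging into fact (1) of the excerpt gives
$$
\mathsf{E}^n_j(M)=\Hom_{Gr(A)}(Q_n,k(j))=\bigoplus_{r=1}^{l}\Hom_{Gr(A)}(A(-i_r),k(j))^{\oplus b_r},
$$
which is nonzero precisely when $j\in\{i_1,\dots,i_l\}$. This already delivers the forward direction: the decomposition of $Q_n$ forces $\mathsf{E}^n(M)$ to be supported in the stated set. For the converse, I would invoke fact (2) of the excerpt, which converts the vanishing of $\Hom_{Gr(A)}(Q_n,k(j))$ into the statement that $Q_n$ has no minimal generator in internal degree $j$; support control on $\mathsf{E}^n(M)$ therefore restricts the degrees in which generators of $Q_n$ can live.

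The main, and really only, obstacle is keeping track of the role of minimality, which is what upgrades the computation into an ``iff''. Without $\ker\partial_n\subseteq JQ_n$, $Q_n$ could carry superfluous free summands whose generators lie in $JQ_n$ and are therefore invisible to $\underline{\Hom}_A(-,k)$; the minimality built into $\mathcal{Q}$ excludes this, so the bookkeeping of generator degrees on the module side and of support on the Koszul dual side matches perfectly.
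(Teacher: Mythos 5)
Your proposal is correct and follows essentially the same route the paper intends: the lemma is stated there without proof as an immediate consequence of facts (1) and (2) preceding it, and your argument simply makes that explicit by decomposing $Q_n$ into shifted summands, computing $\Hom_{Gr(A)}(A(-t),k(j))$, and using minimality of $\mathcal{Q}$ to identify generator degrees with the support of $\mathsf{E}^n(M)$. The only cosmetic caveat is that in the paper's generality $k$ is a semisimple Artin algebra rather than a field, so $Q_n$ is a direct sum of shifts of modules $Ae$ for idempotents $e$ of $k$ rather than of copies of $A$ itself; the computation $\Hom_{Gr(A)}(Ae(-t),k(j))=(ek)_{t-j}$ goes through unchanged.
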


Since $\mathsf{E}(M)$ is bigraded for any $M \in Gr(A)$, we say
$\mathsf{E}^n(M)$ is {\it pure\/} in the sense that it is
supported in a single second degree for any $n\geq 0$. A {\it pure
resolution\/} means every projective module in a minimal projective
resolution is generated in one degree; if not, we call it a {\it
non-pure resolution}. The following result is well-known for pure
parts in the resolution.
\begin{lemma}[\cite{GMMZ} Proposition 3.6]\label{gmmz}
Suppose that $P_n$ is generated in single degree $d_n$ for $n=i,
j,\, i+j$ satisfying $d_i+ d_j= d_{i+j}$. Then
$$
E^{i+j}(A)= E^i(A)E^j(A)=E^j(A)E^i(A).
$$
\end{lemma}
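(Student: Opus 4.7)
My approach is to verify containment and surjectivity of the Yoneda products $E^i(A) E^j(A)$ and $E^j(A) E^i(A)$ inside $E^{i+j}(A)$ separately. For containment, I would apply Lemma~\ref{le} to the purity hypotheses at positions $i$, $j$, $i+j$, obtaining $E^n(A) = E^n_{d_n}(A)$ for those $n$. Since the Yoneda product is additive on internal degrees, $E^i(A) E^j(A) \subseteq E^{i+j}_{d_i + d_j}(A)$, which equals $E^{i+j}_{d_{i+j}}(A) = E^{i+j}(A)$ by the hypothesis $d_i + d_j = d_{i+j}$; the analogous containment for $E^j(A) E^i(A)$ follows identically. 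Hence both products already land inside $E^{i+j}(A)$ with no waste of grading.

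For surjectivity of the multiplication $\mu\colon E^i(A) \otimes E^j(A) \to E^{i+j}(A)$, I would represent $\alpha \in E^i(A)$ by a graded cocycle $\alpha\colon P_i \to k(d_i)$, lift it inductively to a chain map $\tilde\alpha_\bullet\colon P_{\bullet+i} \to P_\bullet(d_i)$, and compose with $\beta\colon P_j \to k(d_j)$ representing $\beta \in E^j(A)$ to obtain $\alpha \cdot \beta = \beta \circ \tilde\alpha_j \in E^{i+j}(A)$. Writing $V_n := (P_n/JP_n)_{d_n}$ for the minimal-generator space, the equality $d_{i+j} = d_i + d_j$ forces $\tilde\alpha_j$ to restrict to a map $V_{i+j} \to V_j$, since $(P_j(d_i))_{d_{i+j}} = (P_j)_{d_j} = V_j$. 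The Yoneda pairing is thus captured by a bilinear map $E^i(A) \times V_{i+j} \to V_j$, or equivalently by a $k$-linear map $\Phi\colon V_{i+j} \to V_i \otimes V_j$ (using $E^i(A) \cong V_i^*$), and surjectivity of $\mu$ becomes equivalent to injectivity of $\Phi$.

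Injectivity of $\Phi$ then follows from minimality of the resolution combined with the pure-degree hypothesis. A nonzero $v \in V_{i+j}$ is a minimal generator of the syzygy $\Omega^{i+j}(k) = \Omega^j(\Omega^i(k))$, hence $d_{i+j}(v)$ is nonzero in the expected ``top'' layer of $JP_{i+j-1}$; tracking this through iterated chain lifts and using purity of $P_i$ to extract a $V_i$-component, I would show that some choice of $\alpha$ yields $\tilde\alpha_j(v) \neq 0$, hence $\Phi(v) \neq 0$. This gives $E^{i+j}(A) = E^i(A) E^j(A)$. The symmetric argument---lifting $\beta$ through a chain map $\tilde\beta_\bullet\colon P_{\bullet+j} \to P_\bullet(d_j)$ and restricting $\tilde\beta_i$ to $V_{i+j} \to V_i$---establishes $E^{i+j}(A) = E^j(A) E^i(A)$.

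The main obstacle is the third step: verifying injectivity of $\Phi$ while assuming purity only at positions $i$, $j$, $i+j$, with no control over the intermediate projectives $P_{i+1}, \ldots, P_{i+j-1}$. The additivity $d_i + d_j = d_{i+j}$ is the essential mechanism here---it is precisely what forces the chain lifts to restrict cleanly between generator spaces and isolates the top-level contribution from any non-pure intermediate layers, allowing the minimality of the resolution to conclude $\Phi(v) \neq 0$ for every nonzero $v$.
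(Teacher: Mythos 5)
Your statement is quoted by the paper from \cite{GMMZ} (Proposition 3.6) without proof, so the benchmark is the argument there --- whose pattern this paper itself replays in the proof of Proposition \ref{prop3}. Your first two steps are correct but routine: the containment is automatic (a Yoneda product of classes in $E^i(A)$ and $E^j(A)$ always lies in $E^{i+j}(A)$), and dualizing surjectivity of $\mu\colon E^i(A)\otimes E^j(A)\to E^{i+j}(A)$ into injectivity of $\Phi\colon V_{i+j}\to V_i\otimes V_j$ is fine granting local finiteness. The entire content of the lemma is your third step, and there your proposal has a genuine gap. Concretely: let $\Theta_\bullet\colon P_{i+\bullet}\to V_i\otimes P_\bullet(d_i)$ be a chain lift of the projection $\Omega^i(k)\to \Omega^i(k)/J\Omega^i(k)$, so that $\Phi(v)=\Theta_j(v)\in V_i\otimes V_j$ for $v\in V_{i+j}$. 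If $\Phi(v)=0$ for some $v\neq 0$, the only stage-by-stage relation you can extract is $\Theta_{j-1}(dv)=0$ with $dv\in JP_{i+j-1}$; since nothing whatsoever is assumed about the internal degrees of $P_{i+1},\dots,P_{i+j-1}$, and $\Theta_{j-1}$ is nowhere near injective, minimality produces no contradiction at any stage. The additivity $d_i+d_j=d_{i+j}$ tells you only \emph{where} $\Theta_j(v)$ must live, not that it is nonzero; ``tracking through iterated chain lifts'' is here an assertion of the conclusion, not a mechanism.

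The missing idea is a support bound on the resolution of the kernel module --- precisely the unnumbered lemma the paper quotes just before Lemma \ref{le2} (\cite{GMMZ}, Lemma 3.2): if $N$ is supported in degrees $\geq 0$ and $P_n$ is supported in degrees $\geq s$, then the $n$-th term of the minimal resolution of $N$ is supported in degrees $\geq s$. Set $M=\Omega^i(k)(-d_i)\in Gr_0(A)$ (so $\mathsf{E}^j(M)\cong E^{i+j}(A)$ up to shift, the identification also used in the proof of Theorem \ref{thm1}) and apply the bound to $JM(-1)$: then $\mathsf{E}^j(JM)$ is supported in degrees $\geq d_j+1$, while $\mathsf{E}^j(M)$ is supported in the single degree $d_{i+j}-d_i=d_j$. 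Hence in the long exact sequence attached to $0\to JM\to M\to M/JM\to 0$ the map $\mathsf{E}^j(M)\to\mathsf{E}^j(JM)$ vanishes for degree reasons, so $\mathsf{E}^j(M/JM)\to\mathsf{E}^j(M)$ is onto, and by Lemma \ref{le2} its image is $E^j(A)\mathsf{E}^0(M)$; translating back yields $E^{i+j}(A)=E^j(A)E^i(A)$, and interchanging $i$ and $j$ gives the other factorization. In your language: the cokernel of $\mu$ embeds into $\mathsf{E}^j_{d_j}(JM)$, and it is this support lemma, not chain tracking through the uncontrolled intermediate projectives, that kills it. Your reduction to injectivity of $\Phi$ is a reasonable reformulation, but without this input (or an equivalent) the proof does not close.
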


The Koszulity of algebras with pure resolutions has been studied in
many papers (see, for example, \cite{Be, BBK, GM, GMMZ, GS, LHL,
P}). We refer to \cite{BBK} for the notion of the almost Koszul
algebras, to \cite{GM} for the $\delta$-Koszul algebras, to
\cite{LHL} for the piecewise-Koszul algebras. As mentioned in the
introductory section, the latter is related to a pair of integers
$(p, N)$, the parameter $p$ shows the periodicity of generators'
degree distribution in the resolution, and the other one is related
to the jumping degree. The definition agrees with the classical
Koszul algebra when the period equals to the jumping degree, and
goes back to the $N$-Koszul algebra when the period $p=2$. One may
imply, on the other hand, that an almost Koszul algebra must be a
piecewise-Koszul algebra, moreover, the two concepts become
consistent if and only if $A=A_0\oplus A_1\oplus\cdots \oplus A_q$
and $N\ge p+q-1$. A criterion theorem for a graded algebra $A$ to be
piecewise-Koszul is that $E(A)$ is generated by $E^0(A), E^1(A),
E^p(A)$, and $E^p(A)=E^p_N(A)$.

By comparison, within our knowledge, we have less clue, except the
recent paper \cite{CS}, for the study of Koszulity of algebras whose
trivial modules admitting non-pure resolutions.

\vskip5mm
\section{Bi-Koszul Algebras}\label{sec}

In this section, we give the definition of (strongly) bi-Koszul
algebras and discuss their homological properties.

To meet the non-pure situation, we first introduce a {\it resolution
map}
$$
\varDelta: \mathbb{N}\longrightarrow \mathbb{N}\times \mathbb{N}
$$
by
$$
\varDelta(n)=\left\{\begin{array}{llll}
\frac{n}{3}(2d, 2d), & \mbox{if $n \equiv 0 (\textrm{mod} 3)$,}\\
\frac{n-1}{3}(2d, 2d)+(1,1), & \mbox{if $n \equiv 1 (\textrm{mod} 3)$,}\\
\frac{n-2}{3}(2d, 2d)+(d, d+1), & \mbox{if $n \equiv 2 (\textrm{mod}
3)$,}\end{array} \right.
$$
where $d\geq 2$ is a fixed integer, and $\mathbb{N}\times \mathbb{N}$
is a $\mathbb{Z}$-module with natural operations.

It is clear that the resolution map has a period of three; that is,
for all $n\geq 0$ ,
$$
\varDelta(n+3)=\varDelta(n)+\varDelta(3).
$$

For simplification, we use $\varDelta(n)$ to express both of its
image $(x, y)$ and of the set $\{x, y\}$, so $\varDelta(0)=\{ 0 \}$,
$\varDelta(1)=\{ 1 \}$ and $\varDelta(2)=\{d,\ d+1\}$.

\begin{defn}\label{bk}
A graded algebra $A=k\oplus A_1\oplus A_2\oplus\cdots$ is called a
{\it{bi-Koszul algebra\/}} if the trivial left $A$-module $k$ admits
a minimal graded projective resolution
$$
\mathcal{P}: \quad \cdots \rightarrow P_n\rightarrow \cdots \rightarrow
P_1\rightarrow P_0\rightarrow k\rightarrow 0,
$$
such that each $P_n$ is generated in degrees $\varDelta(n)$ for all
$n\geq 0$.
\end{defn}

Here are some examples of the bi-Koszul algebras.
\begin{exa}
The AS-regular algebras of global dimension 4 of type
(\emph{13431}) and (\emph{12221}) are the bi-Koszul algebras with
taking $d=2$ and $d=3$, respectively.
\end{exa}
\begin{exa}
Let $\Gamma$ be the following quiver:
$$
\bullet \stackrel{\alpha}\rightarrow \bullet\stackrel{\beta}\rightarrow
\bullet\stackrel{\gamma}\rightarrow\bullet\stackrel{\delta}\rightarrow
\bullet\stackrel{\pi}\rightarrow\bullet
$$
Set $A=\mathds{F}\Gamma/R$, where $R$ is the ideal generated by the
relations $\alpha\beta,\;\beta\gamma\delta$, and $\delta\pi$. One
can easily check that $A$ is a bi-Koszul algebra.
\end{exa}

One of effective approaches for studying the algebra $A$ is to
examine by its Ext-algebra $E(A)$. The resolution $\mathcal{P}$ of a
bi-Koszul algebra $A$ contains pure and non-pure parts. It is easy
to deal with the pure part by noting Lemma \ref{gmmz} and the
periodicity of the resolution map $\varDelta$.

\begin{prop}\label{propp}
Let $A$ be a bi-Koszul algebra. Then
$$
E^{3n}(A)= \underbrace{E^3(A)E^3(A)\cdots E^3(A)}_n
$$
and
$$
E^{3n+1}(A)= E^1(A)E^{3n}(A)=E^{3n}(A)E^1(A).
$$
\end{prop}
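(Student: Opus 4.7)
The plan is to apply Lemma \ref{gmmz} directly, since both identities concern the pure parts of the resolution $\mathcal{P}$, where the degree generators are single and the additivity condition $d_i + d_j = d_{i+j}$ is tautological from the definition of $\varDelta$.

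First I would record the degrees produced by $\varDelta$ on the relevant residue classes. By Definition \ref{bk}, for a bi-Koszul algebra $A$ the projective $P_{3n}$ is generated in the single degree $2nd$, and $P_{3n+1}$ is generated in the single degree $2nd+1$; these are the pure parts of the resolution (the non-pure piece lives in degree $\equiv 2 \pmod 3$). In particular $P_3$ sits in degree $2d$ and $P_1$ in degree $1$.

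For the first identity I would argue by induction on $n$. The base case $n=1$ is trivial. For the inductive step, I take $i=3$ and $j=3(n-1)$ in Lemma \ref{gmmz}: the projectives $P_3$, $P_{3(n-1)}$, and $P_{3n}$ are each pure, generated in degrees $2d$, $2(n-1)d$, and $2nd$ respectively, and $2d + 2(n-1)d = 2nd$. Thus
\[
E^{3n}(A) \;=\; E^3(A)\,E^{3(n-1)}(A) \;=\; E^{3(n-1)}(A)\,E^3(A),
\]
and the induction hypothesis $E^{3(n-1)}(A)=(E^3(A))^{n-1}$ closes the argument. For the second identity, I take $i=1$ and $j=3n$ in Lemma \ref{gmmz}: $P_1$, $P_{3n}$, and $P_{3n+1}$ are all pure in degrees $1$, $2nd$, $2nd+1$, and $1+2nd=2nd+1$, so
\[
E^{3n+1}(A) \;=\; E^1(A)\,E^{3n}(A) \;=\; E^{3n}(A)\,E^1(A).
\]

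There is essentially no obstacle here: both statements are immediate consequences of purity in the correct homological degrees together with the periodicity $\varDelta(n+3)=\varDelta(n)+\varDelta(3)$, which makes the additivity hypothesis of Lemma \ref{gmmz} automatic. The real work in this paper is reserved for degrees $\equiv 2 \pmod 3$, where $P_{3n+2}$ is non-pure and Lemma \ref{gmmz} does not apply; that case will require the finer analysis appearing later (and corresponds to the extra summand $\mathsf{E}^2_{2nd+d+1}(J\Omega^{3n}(k))$ in Theorem \ref{thm0}).
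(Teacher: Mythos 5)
Your proof is correct and is exactly the argument the paper intends: the text immediately preceding the proposition points to Lemma \ref{gmmz} together with the periodicity of $\varDelta$, and the paper's own proof is simply ``It is clear.'' Your write-up supplies the degree bookkeeping ($2d+2(n-1)d=2nd$ and $1+2nd=2nd+1$) that makes the additivity hypothesis of Lemma \ref{gmmz} hold, which is precisely what was left implicit.
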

\begin{proof} It is clear.
\end{proof}

Suppose $A$ is a bi-Koszul algebra, we write
$$
E^{[P]}(A):=\big(\bigoplus_{n\geq 0}E^{3n}(A) \big)\bigoplus
\big(\bigoplus_{n\geq 0}E^{3n+1}(A)\big),
$$
and
$$
E^{[N]}(A):=\bigoplus_{n\geq 0}E^{3n+2}(A)
$$
respectively, then $E(A)$ is decomposed into pure and non-pure parts
as $E(A)= E^{[P]}(A) \oplus E^{[N]}(A)$.
\begin{corollary}
Assume $d>2$. We have
\begin{enumerate}
\item $E^{[P]}(A)$ is a
subalgebra of $E(A)$ which is generated by $E^0(A)$, $E^1(A)$
and $E^3(A)$.
\item $E^{[N]}(A)$ is a module over $E^{[P]}(A)$.
\end{enumerate}
\end{corollary}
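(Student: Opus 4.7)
The plan is to use the explicit degree-support data that a bi-Koszul algebra $A$ carries by Definition \ref{bk}, combined with Lemma \ref{le} and the multiplicativity of the Yoneda product, and then to argue that under the hypothesis $d>2$ all of the ``undesired'' products vanish purely by degree counting.

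First I would record the bookkeeping that falls straight out of $\varDelta$: $E^{3n}(A)$ is pure in second degree $2nd$, $E^{3n+1}(A)$ is pure in second degree $2nd+1$, and $E^{3n+2}(A)$ is supported in $\{2nd+d,\, 2nd+d+1\}$. Yoneda multiplication respects both the homological and internal degrees, so a product $E^{i}(A)\cdot E^{j}(A)\subseteq E^{i+j}(A)$ can only be nonzero on the components whose internal degree lies in the support prescribed by $\varDelta(i+j)$.

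For part (1), Proposition \ref{propp} already handles the ``safe'' products $E^{3n}\cdot E^{3m}$, $E^{3n}\cdot E^{3m+1}$, $E^{3n+1}\cdot E^{3m}$, which all land in $E^{[P]}(A)$ and are generated by $E^{0},E^{1},E^{3}$. The only possibly troublesome multiplication is $E^{3n+1}(A)\cdot E^{3m+1}(A)\subseteq E^{3(n+m)+2}(A)$, whose image lies a priori in $E^{[N]}(A)$. I would compute its internal degree: $(2nd+1)+(2md+1)=2(n+m)d+2$, whereas $E^{3(n+m)+2}(A)$ is supported in $\{2(n+m)d+d,\,2(n+m)d+d+1\}$. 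Since $d>2$, the value $2$ belongs to neither $\{d,d+1\}$, so the product must be zero. Hence $E^{[P]}(A)$ is a subalgebra, and it is generated by $E^{0}(A),E^{1}(A),E^{3}(A)$ directly from Proposition \ref{propp}.

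For part (2), I would verify both the left and the right actions $E^{[P]}(A)\cdot E^{[N]}(A)\subseteq E^{[N]}(A)$ and $E^{[N]}(A)\cdot E^{[P]}(A)\subseteq E^{[N]}(A)$. The products $E^{3n}\cdot E^{3m+2}$ and $E^{3m+2}\cdot E^{3n}$ already land in $E^{3(n+m)+2}\subseteq E^{[N]}(A)$, so they are fine. The remaining cases $E^{3n+1}\cdot E^{3m+2}$ and $E^{3m+2}\cdot E^{3n+1}$ land a priori in $E^{3(n+m)+3}=E^{3(n+m+1)}\subseteq E^{[P]}(A)$, so I must show they vanish. Their internal degrees are $2(n+m)d+d+1$ or $2(n+m)d+d+2$, while $E^{3(n+m+1)}(A)$ is pure in degree $2(n+m+1)d=2(n+m)d+2d$; the equations $d+1=2d$ and $d+2=2d$ force $d\in\{1,2\}$, which is excluded. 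Hence the products are zero, and (2) follows.

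The main obstacle is conceptual rather than computational: one has to recognise that the bound $d>2$ is exactly the numerical condition that separates the pure and non-pure pieces of the Yoneda algebra, ruling out any ``leakage'' between $E^{[P]}(A)$ and $E^{[N]}(A)$. Once this is framed via Lemma \ref{le} and the internal-degree bookkeeping above, every verification reduces to a single-line inequality.
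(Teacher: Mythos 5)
Your proof is correct and follows essentially the same route as the paper: the paper's own argument likewise observes that $E^{3s+1}(A)E^{3t+2}(A)=E^{3s+2}(A)E^{3t+1}(A)=E^{3s+1}(A)E^{3t+1}(A)=0$ by comparing second (internal) degrees, and then invokes Lemma \ref{le} and Proposition \ref{propp}. You have simply made the degree bookkeeping explicit, including the precise role of the hypothesis $d>2$, which the paper leaves to the reader.
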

\begin{proof} By noting the second degrees, one can easily check that
$E^{3s+1}(A)E^{3t+2}(A)=E^{3s+2}(A)E^{3t+1}(A)=
E^{3s+1}(A)E^{3t+1}(A)= 0$ for any integers $s, t\geq 0$. The result
then follows from Lemma \ref{le} and Proposition \ref{propp}.
\end{proof}

For non-pure part of the resolution, there is an obstruction because
of its non-purity. We quote two lemmas firstly.

\begin{lemma}[\cite{GMMZ} Lemma 3.2]
Let $M\in Gr(A)$ supported in $\{j\ |\ j\geq0\}$ with the minimal
graded projective resolution $\mathcal{Q}$. Let $n\geq 1$, assume
that $P_n$ is supported in $\{j\mid j\geq s\}$. Then $Q_n$ is
supported in $\{j\ |\ j\geq s\}$. \qed
\end{lemma}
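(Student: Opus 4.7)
My plan is to compute $\Tor^A_n(k, M)$ in two different ways and compare, thereby forcing the support condition on $Q_n$ out of the support condition on $P_n$. Using the minimal resolution $\mathcal{Q}$ of $M$ and applying $k \otimes_A -$, minimality means every induced differential vanishes and one reads off
$$\Tor^A_n(k, M) \;\cong\; k \otimes_A Q_n.$$
Since $Q_n$ is graded free, the graded vector space $k \otimes_A Q_n$ records precisely the degrees of a minimal generating set of $Q_n$, so it suffices to show that $\Tor^A_n(k, M)$ is supported in $\{j \mid j \geq s\}$.

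For the second computation I would pass to the minimal projective resolution $\mathcal{P}'$ of $k$ as a \emph{right} $A$-module. The standard symmetry for connected graded algebras---that the minimal left and right resolutions of $k$ both compute the same bigraded vector space $\Tor^A_n(k, k)$---implies $P'_n$ has the same degree distribution as $P_n$, and in particular is supported in $\{j \mid j \geq s\}$. Writing $P'_n = \bigoplus_i A(-j_i)$ with each $j_i \geq s$, the chain term $P'_n \otimes_A M = \bigoplus_i M(-j_i)$ has degree-$j$ component $\bigoplus_i M_{j-j_i}$; this vanishes whenever $j < s$, because then $j - j_i \leq j - s < 0$ and $M$ is supported in non-negative degrees. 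Hence $(\mathcal{P}' \otimes_A M)_n$ sits in degrees $\geq s$, and so does its subquotient $\Tor^A_n(k, M)$. Combined with the first computation, this forces $k \otimes_A Q_n$ into degrees $\geq s$, hence $Q_n$ itself.

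The main subtle point is the left--right transfer: the hypothesis concerns the \emph{left} resolution $\mathcal{P}$, whereas the Tor computation I exploit requires a right projective resolution of $k$. Once one invokes the standard fact that the minimal left and right resolutions of $k$ over a connected graded algebra have identical generator-degrees in each homological position, everything else is a formal comparison driven by minimality of $\mathcal{Q}$ and the non-negativity of $M$. No induction on $n$ is needed, and no structural hypothesis on $A$ beyond being connected graded is used.
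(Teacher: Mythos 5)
Your argument is correct. Note first that the paper itself offers no proof of this lemma: it is quoted verbatim from \cite{GMMZ} (Lemma 3.2) with an immediate \qed, so there is no in-paper proof to compare against. Your balanced-Tor computation is a clean, self-contained justification: minimality of $\mathcal{Q}$ gives $\Tor^A_n(k,M)\cong k\otimes_A Q_n=Q_n/JQ_n$, which detects exactly the generator degrees of $Q_n$; minimality of the left and right resolutions of $k$ gives $P_n/JP_n\cong \Tor^A_n(k,k)\cong P'_n/P'_nJ$ as bigraded spaces, which is precisely the left--right transfer you flag as the subtle point; and the term-by-term degree bound on $P'_n\otimes_A M$ passes to the subquotient $\Tor^A_n(k,M)$. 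All three steps are sound, and the observation that ``supported in degrees $\geq s$'' is equivalent, for a graded projective, to ``generated in degrees $\geq s$'' closes the loop via graded Nakayama. The only caveat worth recording is that in this paper $k=A/J$ is a semisimple Artin algebra over $\mathds{F}$ rather than a field, so ``connected graded'' is not literally the standing hypothesis; but nothing in your argument uses more than semisimplicity of $A/J$ (projectives are sums of shifts of $Ae$ for idempotents $e\in k$, and the same Tor bookkeeping applies), so the proof goes through unchanged in the paper's generality.
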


\begin{lemma}\label{le2}
Let $M \in Gr_0(A)$. For the natural induced map
$$
\mathsf{E}^n(M/JM)\stackrel{f}\rightarrow \mathsf{E}^n(M)\stackrel{g}
\rightarrow \mathsf{E}^n(JM),
$$
we have $\mathrm{Im}(f)= E^n(A)\mathsf{E}^0(M)$, the Yoneda
product of $E^n(A)$ and $\mathsf{E}^0(M)$.
\end{lemma}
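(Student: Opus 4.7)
The plan is to establish the equality $\mathrm{Im}(f)=E^n(A)\cdot\mathsf{E}^0(M)$ by two inclusions, both of which follow once three ingredients are in hand: (a) the map $f$ is $E(A)$-linear, (b) the restriction of $f$ in homological degree $0$ is an isomorphism $\mathsf{E}^0(M/JM)\xrightarrow{\sim}\mathsf{E}^0(M)$, and (c) $\mathsf{E}^n(M/JM)$ is itself generated over $E(A)$ by its degree-zero part, i.e.\ $\mathsf{E}^n(M/JM)=E^n(A)\cdot\mathsf{E}^0(M/JM)$.

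Ingredient (a) is essentially formal: since $f$ is induced by the canonical surjection $\pi:M\to M/JM$ (via the long exact sequence associated to $0\to JM\to M\to M/JM\to 0$), and the Yoneda product on $\mathsf{E}(-)$ is natural with respect to $A$-module maps in the module argument, one has $f(\alpha\cdot\gamma)=\alpha\cdot f(\gamma)$ for all $\alpha\in E^n(A)$, $\gamma\in\mathsf{E}^0(M/JM)$. Ingredient (b) is also immediate: because $M\in Gr_0(A)$, the submodule $JM$ is supported in strictly positive degrees while $k$ lives only in degree $0$, so every graded $A$-homomorphism $M\to k$ automatically kills $JM$; hence $\underline{\Hom}_A(\pi,k)$ is a bijection.

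The real work is ingredient (c). Since $M/JM$ is annihilated by $J$ and concentrated in degree $0$, it is a graded semisimple $k$-module. I would build a minimal graded $A$-projective resolution of $M/JM$ of the form $\mathcal{P}\otimes_k(M/JM)$, obtained by tensoring the fixed minimal resolution $\mathcal{P}$ of ${}_Ak$ with $M/JM$ over the semisimple base $k$. Applying $\underline{\Hom}_A(-,k)$ term by term and using the adjunction $\underline{\Hom}_A(P_n\otimes_k(M/JM),k)\cong\underline{\Hom}_A(P_n,k)\otimes_k\underline{\Hom}_k(M/JM,k)$ identifies $\mathsf{E}^n(M/JM)$ with $E^n(A)\otimes_k\mathsf{E}^0(M/JM)$, and under this identification the Yoneda action of $E^n(A)$ is simply multiplication on the left tensor factor. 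Thus $\mathsf{E}^n(M/JM)=E^n(A)\cdot\mathsf{E}^0(M/JM)$.

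Putting the three pieces together yields $\mathrm{Im}(f)=f\bigl(E^n(A)\cdot\mathsf{E}^0(M/JM)\bigr)=E^n(A)\cdot f\bigl(\mathsf{E}^0(M/JM)\bigr)=E^n(A)\cdot\mathsf{E}^0(M)$. The main obstacle I anticipate is (c): producing the tensor-product resolution of $M/JM$ in a way compatible with the (semisimple, possibly non-commutative) base $k$ and verifying that the Yoneda action is exactly left multiplication on the first factor. Once that structural step is secured, both inclusions fall out formally from (a) and (b).
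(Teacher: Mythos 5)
Your proposal is correct and follows essentially the same route as the paper: the three ingredients you isolate --- $E(A)$-linearity of $f$, the isomorphism $\mathsf{E}^0(M/JM)\cong\mathsf{E}^0(M)$, and the surjectivity $E^n(A)\mathsf{E}^0(M/JM)=\mathsf{E}^n(M/JM)$ for the semisimple module $M/JM$ --- are precisely the content of the commutative diagram from \cite{GMMZ} that the paper cites as its entire proof. You simply supply the verifications (including the tensor-product resolution $\mathcal{P}\otimes_k(M/JM)$ underlying ingredient (c)) that the paper delegates to the reference.
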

\begin{proof} The result is followed from the commutative diagram below:
$$
\begin{array}{ccccccccccc} & E^n(A)\otimes \mathsf{E}^0(M/JM) & \twoheadrightarrow
&E^n(A)\mathsf{E}^0(M/JM) & \stackrel{=}\rightarrow &
\mathsf{E}^n(M/JM)& \\ & \downarrow \cong && \downarrow &&\downarrow f & \\
& E^n(A)\otimes \mathsf{E}^0(M)&\twoheadrightarrow&
E^n(A)\mathsf{E}^0(M) & \hookrightarrow& \mathsf{E}^n(M ) &
\end{array}
$$
which can be found in \cite{GMMZ}.
\end{proof}

\begin{prop}\label{prop3}
Let $A$ be a bi-Koszul algebra. Let $M \in Gr_0(A)$ with the minimal
graded projective resolution $\mathcal{Q}$. Assume that $Q_n$ is
generated in degrees $\varDelta(n)$. Then there exist $k$-module
isomorphisms, for all $n\geq 0$
$$
\mathsf{E}^{3n+2}(M)\cong E^{3n+2}(A)\mathsf{E}^0(M)\oplus
\mathsf{E}^{3n+2}_{2nd+d+1}(JM).
$$
\end{prop}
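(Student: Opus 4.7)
The natural plan is to extract the long exact sequence of Ext groups obtained by applying $\underline{\Hom}_A(-,k)$ to the canonical short exact sequence $0 \to JM \to M \to M/JM \to 0$, and to identify its pieces using Lemma~\ref{le2}. Concretely,
$$\cdots \to \mathsf{E}^{n-1}(JM) \to \mathsf{E}^n(M/JM) \xrightarrow{f} \mathsf{E}^n(M) \xrightarrow{g} \mathsf{E}^n(JM) \xrightarrow{\partial} \mathsf{E}^{n+1}(M/JM) \to \cdots,$$
where $\mathrm{Im}(f) = E^n(A)\mathsf{E}^0(M)$ by Lemma~\ref{le2}. This yields a short exact sequence of $k$-modules
$$0 \to E^n(A)\mathsf{E}^0(M) \to \mathsf{E}^n(M) \to \ker(\partial) \to 0,$$
which splits because $k$ is semisimple. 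For $n=3n+2$ the theorem then reduces to identifying $\ker(\partial)$ with $\mathsf{E}^{3n+2}_{2nd+d+1}(JM)$ as $k$-modules.

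The analysis of $\ker(\partial)$ rests on two degree observations afforded by the bi-Koszul structure. First, the semisimple module $M/JM$ concentrated in degree $0$ has minimal resolution $\mathcal{P}\otimes_k (M/JM)$, so $\mathsf{E}^{3n+3}(M/JM)$ is supported in $\varDelta(3n+3) = \{2(n+1)d\}$; consequently $\partial\colon \mathsf{E}^{3n+2}(JM) \to \mathsf{E}^{3n+3}(M/JM)$ vanishes in every internal degree other than $2(n+1)d$, so $\ker(\partial)_j = \mathsf{E}^{3n+2}_j(JM)$ whenever $j \ne 2(n+1)d$. Second, by Lemma~\ref{le} and the bi-Koszul hypothesis on $\mathcal{Q}$, both $\mathsf{E}^{3n+2}(M)$ and its submodule $E^{3n+2}(A)\mathsf{E}^0(M)$ are supported in $\varDelta(3n+2)=\{2nd+d,\,2nd+d+1\}$; the displayed short exact sequence forces $\ker(\partial)$ to share this support.

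Combining these two, and noting that $2nd+d$ and $2nd+d+1$ are each different from $2(n+1)d = 2nd+2d$ whenever $d\ge 2$, we arrive at $\ker(\partial) \cong \mathsf{E}^{3n+2}_{2nd+d}(JM) \oplus \mathsf{E}^{3n+2}_{2nd+d+1}(JM)$. The statement as written then reduces to the vanishing $\mathsf{E}^{3n+2}_{2nd+d}(JM) = 0$, which I expect to be the main obstacle. My plan is to re-run the long exact sequence restricted to internal degree $2nd+d$ and argue that $f_{2nd+d}$ must be surjective, so that the Yoneda image $E^{3n+2}_{2nd+d}(A)\mathsf{E}^0(M)$ already exhausts the pure-degree component of $\mathsf{E}^{3n+2}(M)$. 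A complementary tool is the auxiliary short exact sequence $0 \to \Omega^1(M) \to JQ_0 \to JM \to 0$ together with the identification $JQ_0 = \Omega^1(M/JM)$; dimension-shifting gives $\mathsf{E}^n(JQ_0) \cong \mathsf{E}^{n+1}(M/JM)$, which is supported only in the pure degrees $\varDelta(n+1)$ and should constrain $\mathsf{E}^{3n+2}(JM)$ enough to force vanishing at the pure degree $2nd+d$.

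Once $\mathsf{E}^{3n+2}_{2nd+d}(JM)=0$ is verified, the identification $\ker(\partial) \cong \mathsf{E}^{3n+2}_{2nd+d+1}(JM)$ follows, and the split short exact sequence delivers the desired $k$-module isomorphism.
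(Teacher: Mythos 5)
Your setup is the same as the paper's: apply $\underline{\Ext}^*_A(-,k)$ to $0\to JM\to M\to M/JM\to 0$, identify the image of $\mathsf{E}^{3n+2}(M/JM)\to\mathsf{E}^{3n+2}(M)$ with $E^{3n+2}(A)\mathsf{E}^0(M)$ via Lemma \ref{le2}, and split the resulting short exact sequence over the semisimple ring $k$. Your reduction of the cokernel to $\mathsf{E}^{3n+2}_{2nd+d}(JM)\oplus\mathsf{E}^{3n+2}_{2nd+d+1}(JM)$ is also correct. But you have correctly identified, and then left open, the one step that carries the content of the proposition: the vanishing $\mathsf{E}^{3n+2}_{2nd+d}(JM)=0$. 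Neither of your two proposed strategies closes it. Arguing that $f_{2nd+d}$ is surjective is circular: by Lemma \ref{le2} its image is exactly $E^{3n+2}_{2nd+d}(A)\mathsf{E}^0(M)$, so surjectivity of $f_{2nd+d}$ \emph{is} the degree-$(2nd+d)$ part of the conclusion, and the long exact sequence gives no independent handle on it. The dimension-shifting route via $0\to\Omega^1(M)\to JQ_0\to JM\to 0$ only exhibits $\mathsf{E}^{3n+2}_{2nd+d}(JM)$ as a quotient of (a subspace of) $\mathsf{E}^{3n+1}_{2nd+d}(\Omega^1(M))\cong\mathsf{E}^{3n+2}_{2nd+d}(M)$, which is not known to vanish --- indeed it is precisely the degree in which the proposition asserts $\mathsf{E}^{3n+2}(M)$ may be nonzero. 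No amount of exact-sequence bookkeeping will produce the vanishing, because it is a statement about internal-degree bounds in minimal resolutions, not about exactness.

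The missing ingredient is the support bound supplied by the quoted Lemma 3.2 of \cite{GMMZ}, which the paper invokes in the first line of its proof: since $JM(-1)$ is supported in $\{j\mid j\ge 0\}$ and $P_m$ is supported in $\{j\mid j\ge\min\varDelta(m)\}$, the $m$-th term of the minimal resolution of $JM(-1)$ is supported in $\{j\mid j\ge\min\varDelta(m)\}$; hence $\mathsf{E}^m(JM)$ is supported in $\{j\mid j\ge\min\varDelta(m)+1\}$. Taking $m=3n+2$ gives support in $\{j\mid j\ge 2nd+d+1\}$, so $\mathsf{E}^{3n+2}_{2nd+d}(JM)=0$, and your argument then goes through. (The same bound is what lets the paper split the entire long exact sequence into degreewise short exact sequences, but for the stated proposition only this one vanishing is needed beyond what you have.) As written, your proposal is an outline with the decisive step unproved and the proposed routes to it unworkable.
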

\begin{proof}
Since $JM(-1)\in Gr_0(A) $, $\mathsf{E}^n(JM)$ is supported in
$\{j\ |\ j\geq \mathrm{min} \varDelta(n)+1\}$. Consider the graded
short exact sequence
$$
0\rightarrow JM\rightarrow M\rightarrow M/JM\rightarrow 0.
$$
Applying  $\underline{\Ext}_A^*(-, k)$, we obtain a long
exact sequence
$$
\begin{array}{r}0\rightarrow \mathsf{E}^0(M/JM)\rightarrow \mathsf{E}^0(M)
\rightarrow \mathsf{E}^0(JM)\rightarrow
\mathsf{E}^1(M/JM)\rightarrow \mathsf{E}^1(M)\rightarrow
\mathsf{E}^1(JM)\rightarrow\\\rightarrow
\mathsf{E}^2(M/JM)\rightarrow \mathsf{E}^2(M)\rightarrow
\mathsf{E}^2(JM)\rightarrow \mathsf{E}^3(M/JM)\rightarrow
\mathsf{E}^3(M)\rightarrow \mathsf{E}^3(JM)\rightarrow
\\\rightarrow \mathsf{E}^4(M/JM)\rightarrow
\mathsf{E}^4(M)\rightarrow \mathsf{E}^4(JM)\rightarrow
\mathsf{E}^5(M/JM) \rightarrow \mathsf{E}^5(M)\rightarrow
\mathsf{E}^5(JM)\rightarrow
\\\rightarrow \mathsf{E}^6(M/JM)\rightarrow
\mathsf{E}^6(M)\rightarrow
\mathsf{E}^6(JM)\rightarrow\cdots.\hskip49.5mm\end{array}
$$
Comparing the second degrees of all modules in the long exact
sequence above, we have a series of exact sequences:
$$
0\rightarrow \mathsf{E}^0(M/JM)\rightarrow
\mathsf{E}^0(M)\rightarrow 0,
$$
$$
0\rightarrow \mathsf{E}^0_1(JM)\rightarrow
\mathsf{E}^1_1(M/JM)\rightarrow
\mathsf{E}^1_1(M)\rightarrow 0,
$$
$$
0\rightarrow \mathsf{E}^1_d(JM)\rightarrow
\mathsf{E}^2_d(M/JM)\rightarrow
\mathsf{E}^2_d(M)\rightarrow 0,
$$
$$
0\rightarrow \mathsf{E}^1_{d+1}(JM)\rightarrow
\mathsf{E}^2_{d+1}(M/JM)\rightarrow
\mathsf{E}^2_{d+1}(M)\rightarrow
\mathsf{E}^2_{d+1}(JM)\rightarrow 0,
$$
$$0\rightarrow \mathsf{E}^2_{2d}(JM)\rightarrow
\mathsf{E}^3_{2d}(M/JM)\rightarrow
\mathsf{E}^3_{2d}(M)\rightarrow 0,
$$
$$
0\rightarrow \mathsf{E}^3_{2d+1}(JM)\rightarrow
\mathsf{E}^4_{2d+1}(M/JM)\rightarrow
\mathsf{E}^4_{2d+1}(M)\rightarrow 0,
$$
$$
0\rightarrow \mathsf{E}^4_{3d}(JM)\rightarrow
\mathsf{E}^5_{3d}(M/JM)\rightarrow
\mathsf{E}^5_{3d}(M)\rightarrow 0,
$$
$$
0\rightarrow \mathsf{E}^4_{3d+1}(JM)\rightarrow
\mathsf{E}^5_{3d+1}(M/JM)\rightarrow
\mathsf{E}^5_{3d+1}(M)\rightarrow
\mathsf{E}^5_{3d+1}(JM)\rightarrow 0,
$$
$$
0\rightarrow \mathsf{E}^5_{4d}(JM)\rightarrow
\mathsf{E}^6_{4d}(M/JM)\rightarrow
\mathsf{E}^6_{4d}(M)\rightarrow 0,
$$
$$
\cdots\cdots.
$$

Hence, by Lemma \ref{le2},  there are exact sequences:
$$
0\rightarrow E^{3n+2}_{2nd+d}(A)\mathsf{E}^0(M)\rightarrow
\mathsf{E}^{3n+2}_{2nd+d}(M )\rightarrow 0,
$$
and
$$
0\rightarrow E^{3n+2}_{2nd+d+1}(A)\mathsf{E}^0(M)\rightarrow
\mathsf{E}^{3n+2}_{2nd+d+1}(M )\rightarrow
\mathsf{E}^{3n+2}_{2nd+d+1}(JM )\rightarrow 0,
$$
for any $n\geq 0$. Note that the second short exact sequence is split
as $k$-modules, and both $E^{3n+2}(A)$ and $\mathsf{E}^{3n+2}(M)$
are supported only in $\varDelta(3n+2)$, so we complete the proof.
\end{proof}

We may now prove Theorem \ref{thm0}, which we restate:
\begin{theorem} \label{thm1}
The following statements are equivalent:
\begin{enumerate}
\item $A$ is a bi-Koszul algebra;
\item $E(A)$ begins with $E^1(A)=
E^1_1(A)$, $E^2(A)= E^2_d(A)\oplus E^2_{d+1}(A)$,
$E^3(A)= E^3_{2d}(A)$, and for each $n\geq 1$,
\begin{enumerate}
\item $E^{3n}(A)= \overbrace{E^3(A)E^3(A)\cdots E^3(A)}^n$,
\item $E^{3n+1}(A)= E^1(A)E^{3n}(A)$,
\item $E^{3n+2}(A)\cong E^2(A)E^{3n}(A)\oplus \mathsf{E}^2_{2nd+d+1}
(J\Omega^{3n}(k))$ as $k$-modules.
\end{enumerate}
\end{enumerate}
\end{theorem}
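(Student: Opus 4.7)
The plan is to prove the equivalence by separately handling the pure and non-pure parts of the resolution and invoking Lemma \ref{le} throughout to translate between the generation degrees of $P_n$ and the second-degree support of $E^n(A)$.

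For $(1) \Rightarrow (2)$: Assume $A$ is bi-Koszul. Since $\varDelta(1)=\{1\}$, $\varDelta(2)=\{d,d+1\}$, and $\varDelta(3)=\{2d\}$, Lemma \ref{le} immediately yields $E^1(A)=E^1_1(A)$, $E^2(A)=E^2_d(A)\oplus E^2_{d+1}(A)$, and $E^3(A)=E^3_{2d}(A)$. The identities (a) and (b) on the pure part are exactly Proposition \ref{propp}. For (c), I would apply Proposition \ref{prop3} to $M:=\Omega^{3n}(k)(2nd)\in Gr_0(A)$, whose minimal projective resolution is the shifted tail $(P_{3n+m}(2nd))_{m\geq 0}$ of $\mathcal{P}$; the periodicity $\varDelta(3n+m)-2nd=\varDelta(m)$ ensures the hypothesis of Proposition \ref{prop3} holds with $n$ replaced by $0$. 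The resulting isomorphism $\mathsf{E}^{2}(M)\cong E^{2}(A)\mathsf{E}^0(M)\oplus \mathsf{E}^{2}_{d+1}(JM)$ then translates, via dimension-shifting $\mathsf{E}^{i}(\Omega^{3n}(k))\cong E^{3n+i}(A)$ and the reverse shift by $2nd$, into the asserted decomposition of $E^{3n+2}(A)$: the factor $E^2(A)\mathsf{E}^0(M)$ matches the Yoneda product $E^2(A)E^{3n}(A)$ (since $\mathsf{E}^0(M)\cong E^{3n}(A)$ as $k$-modules and this identification intertwines the Yoneda product), and $\mathsf{E}^{2}_{d+1}(JM)$ becomes $\mathsf{E}^{2}_{2nd+d+1}(J\Omega^{3n}(k))$ after undoing the shift.

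For $(2) \Rightarrow (1)$: It suffices, by Lemma \ref{le}, to show that each $P_n$ is generated in the degrees $\varDelta(n)$, equivalently that $E^n(A)$ is supported in $\varDelta(n)$. The low cases $n=0,1,2,3$ are built into hypothesis (2). For $n\geq 4$ I induct on $n$ using (a), (b), (c): the product structure in (a) forces $E^{3n}(A)$ to live in second degree $n\cdot 2d=2nd$; (b) forces $E^{3n+1}(A)$ into second degree $2nd+1$; and (c) places $E^{3n+2}(A)$ into degrees $2nd+d$ (contributed by $E^2_d(A)E^{3n}(A)$) and $2nd+d+1$ (from the other summand), which is exactly $\varDelta(3n+2)$.

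The main obstacle is the dimension-shifting argument in (c) of the forward direction: one must verify carefully that applying Proposition \ref{prop3} to the shifted syzygy $\Omega^{3n}(k)(2nd)$ really does reproduce the stated $E(A)$-theoretic decomposition, and that the summand $E^2(A)\mathsf{E}^0(\Omega^{3n}(k)(2nd))$ is genuinely the Yoneda image $E^2(A)E^{3n}(A)$ inside $E^{3n+2}(A)$. This compatibility between the Yoneda product on $E(A)$ and the module action on $\mathsf{E}(\Omega^{3n}(k))$ is where the bookkeeping of shifts becomes delicate, but once the natural isomorphism $\mathsf{E}^\ast(\Omega^{3n}(k))\cong E^{\ast+3n}(A)$ is shown to be $E(A)$-linear, both directions fall out.
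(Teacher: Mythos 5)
Your proposal follows the paper's own proof essentially verbatim: Lemma \ref{le} and Proposition \ref{propp} dispose of the pure part, Proposition \ref{prop3} applied to the normalized syzygy $\Omega^{3n}(k)(-2nd)$ yields (c), and the converse direction is the same support-counting induction via Lemma \ref{le}. The only quibbles are the sign of the shift --- with the paper's convention $M(n)_j=M_{j-n}$ you must shift by $-2nd$, not $2nd$, to land in $Gr_0(A)$ --- and the compatibility of the Yoneda product with the identification $\mathsf{E}^{*}(\Omega^{3n}(k))\cong E^{*+3n}(A)$, which you rightly flag as the delicate point but which the paper itself also asserts without further justification.
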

\begin{proof}
Suppose we have (2), it is easy to see that $P_1$ is
$\varDelta(1)$-generated, $P_2$ is $\varDelta(2)$-generated and
$P_3$ is $\varDelta(3)$-generated. For all $n\geq 1$, $P_{3n}$ and
$P_{3n+1}$ are generated in degree $\varDelta(3n)$ and
$\varDelta(3n+1)$ from the conditions (a) and (b), respectively. On
the other hand, by the condition (c) and Proposition \ref{prop3}, we
get
$$
E^{3n+2}(A)=\mathsf{E}^2(\Omega^{3n}(k))\cong
E^2(A)\mathsf{E}^0(\Omega^{3n}(k))\oplus
\mathsf{E}^2_{2nd+d+1}(J\Omega^{3n}(k)).
$$
$E^2(A)$ is supported in $\{d, d+1\}$, and $\Omega^{3n}(k)$ is
$2nd$-generated, so $E^{3n+2}(A)$ is supported in
$\{2nd+d, 2nd+d+1\}$; that is, $P_{3n+2}$ is
$\varDelta(3n+2)$-generated. This proves that $A$ is a bi-Koszul
algebra.

Conversely, due to Lemma \ref{le} and Proposition \ref{propp}, we
need only to show that the last isomorphism is true for each $n\geq
1$.

Since $E^{3n+2}(A)= \mathsf{E}^2(\Omega^{3n}(k))$ and
$\Omega^{3n}(k)$ is $2nd$-generated, we have the following minimal
resolution of $\Omega^{3n}(k)(-2nd)\in Gr_0(A)$:
$$
\cdots \rightarrow P_{3n+2}(-2nd)\rightarrow P_{3n+1}(-2nd)
\rightarrow P_{3n}(-2nd)\rightarrow \Omega^{3n}(k)(-2nd)\rightarrow
0.
$$
Denote $M=\Omega^{3n}(k)(-2nd)$. By Proposition \ref{prop3}, we know
that
$$
\mathsf{E}^2(M)\cong E^2(A)\mathsf{E}^0(M)\oplus \mathsf{E}^2_{d+1}(JM).
$$
Therefore,
$$
E^{3n+2}(A)\cong
E^2(A)E^{3n}(A)\oplus\mathsf{E}^2_{2nd+d+1}(J\Omega^{3n}(k)).
$$
\end{proof}

\begin{remark}
The obstruction $\mathsf{E}^2_{2nd+d+1}(J\Omega^{3n}(k))$ in
Theorem \ref{thm1} arises from the bigger degree in
$\varDelta(3n+2)$. It is vanished in the previous examples.
\end{remark}
\begin{defn}
We call a bi-Koszul algebra $A$ {\it strongly\/} if the obstruction
is vanished.
\end{defn}

Obviously, a strongly bi-Koszul algebra has nice homological
properties, for example, the Koszul dual $E(A)$ of such an algebra
is generated by $E^0(A), E^1(A), E^2(A)$ and $E^3(A)$. The following
gives a criterion for a bi-Koszul algebra to be strongly by
syzygies.
\begin{prop}\label{prop6}
Suppose that $A$ is a bi-Koszul algebra. Then $A$ is strongly if and
only if $\Omega^2(J\Omega^{3n}(k))$ and $\Omega^{3n+3}(k)$ are
generated in the same degrees for all $n\geq 1$.
\end{prop}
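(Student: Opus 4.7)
The plan is to recast both sides of the equivalence as statements about the internal-degree support of $\mathsf{E}^2(J\Omega^{3n}(k))$, and then to pin that support down via the Ext long exact sequence.

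By Lemma \ref{le}, the generating degrees of $\Omega^2(J\Omega^{3n}(k))$ are precisely the support of $\mathsf{E}^2(J\Omega^{3n}(k))$, while those of $\Omega^{3n+3}(k)$ consist of the single degree $2(n+1)d$. So the assertion reduces to: for every $n\ge 1$, the support of $\mathsf{E}^2(J\Omega^{3n}(k))$ is contained in $\{2(n+1)d\}$ iff $\mathsf{E}^2_{2nd+d+1}(J\Omega^{3n}(k))=0$. Shifting into $Gr_0(A)$ by setting $M=\Omega^{3n}(k)(-2nd)$, whose minimal resolution $Q_i=P_{3n+i}(-2nd)$ has $Q_i$ generated in $\varDelta(i)$ by periodicity of $\varDelta$, one reduces further to: the support of $\mathsf{E}^2(JM)$ lies in $\{2d\}$ iff $\mathsf{E}^2_{d+1}(JM)=0$. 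Here $M$ satisfies the hypotheses of Proposition \ref{prop3}, which, applied with its own internal parameter set to $0$, gives $\mathsf{E}^2(M)\cong E^2(A)\mathsf{E}^0(M)\oplus \mathsf{E}^2_{d+1}(JM)$.

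I would then locate the support of $\mathsf{E}^2(JM)$ via the long exact sequence arising from
\[
0\to JM\to M\to M/JM\to 0,
\]
where $M/JM\cong k^{r_0}$ sits in internal degree $0$. Both $\mathsf{E}^i(M/JM)\cong E^i(A)^{r_0}$ and $\mathsf{E}^i(M)$ are supported in $\varDelta(i)$, so degree-by-degree inspection forces $\mathsf{E}^2_j(JM)=0$ unless $j\in\varDelta(2)\cup\varDelta(3)=\{d,d+1,2d\}$ (these three values being distinct for $d\ge 2$).

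The decisive step is eliminating $j=d$ unconditionally. Lemma \ref{le2} identifies the map $\mathsf{E}^*(M/JM)\to \mathsf{E}^*(M)$ with the Yoneda product $E^*(A)\otimes\mathsf{E}^0(M)\to \mathsf{E}^*(M)$, and Proposition \ref{prop3} places the obstruction summand of $\mathsf{E}^2(M)$ only in degree $d+1$, so in degree $d$ this Yoneda map is already surjective. Since $\mathsf{E}^3_d(M/JM)=0$ as well (because $d\notin\varDelta(3)$ for $d\ge 2$), the long exact sequence forces $\mathsf{E}^2_d(JM)=0$. At $j=d+1$ the cokernel of the same Yoneda map is, by Proposition \ref{prop3}, exactly $\mathsf{E}^2_{d+1}(JM)$. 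Combining these, $\mathsf{E}^2(JM)$ is supported in $\{d+1,2d\}$ in general and in $\{2d\}$ precisely when the obstruction vanishes; unshifting and quantifying over $n\ge 1$ yields the equivalence with $A$ being strongly. The main obstacle is the two-stage degree bookkeeping: first confining the a priori support of $\mathsf{E}^2(JM)$ to $\{d,d+1,2d\}$, and then using the fine-grained content of Proposition \ref{prop3}—the obstruction lying only in degree $d+1$—to peel off the degree $d$ automatically. Once both are in place the equivalence is a formal consequence.
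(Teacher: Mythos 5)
Your proof is correct and follows essentially the same route as the paper: Lemma \ref{le} converts the claim about generating degrees into one about the internal-degree support of $\mathsf{E}^2(J\Omega^{3n}(k))$, and the long exact sequence from $0\to JM\to M\to M/JM\to 0$ (exactly the analysis in the proof of Proposition \ref{prop3}) confines that support to $\{d+1,2d\}$ after shifting to $M=\Omega^{3n}(k)(-2nd)$, so that vanishing of the obstruction is precisely purity of $\Omega^2(JM)$ in degree $2d$. The only divergence is how degree $d$ is excluded: the paper gets this for free from the quoted GMMZ support lemma (since $JM(-1)\in Gr_0(A)$, $\mathsf{E}^2(JM)$ lives in degrees $\geq d+1$), whereas you re-derive it from surjectivity of the Yoneda map in degree $d$ together with $\mathsf{E}^3_d(M/JM)=0$; both are valid, yours being self-contained at the cost of a little extra bookkeeping.
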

\begin{proof}
Assume $\Omega^{3n+3}(k)$ and $\Omega^2(J\Omega^{3n}(k))$ are
generated in the same degrees. Note that now $\Omega^{3n+3}(k)$ is
$(2nd+2d)$-generated, so does $\Omega^2(J\Omega^{3n}(k))$. That
means $\mathsf{E}^2_{2nd+d+1}(J\Omega^{3n}(k))=0$. So $A$ is
strongly bi-Koszul.

Conversely, from the proofs of Proposition \ref{prop3} and Theorem
\ref{thm1}, the assumption of
$\mathsf{E}^2_{2nd+d+1}(J\Omega^{3n}(k))=0$ concludes that
$\mathsf{E}^2(J\Omega^{3n}(k))$ is supported in $\{2nd+2d\}$.
Since $\Omega^{3n+3}(k)$ is $(2nd+2d)$-generated, we get
$\Omega^2(J\Omega^{3n}(k))$ and $\Omega^{3n+3}(k)$ are generated
in the same degrees.
\end{proof}

We will give another criterion for a bi-Koszul algebra being
strongly next section, and will provide a method of constructing a
strongly bi-Koszul algebra in the last section.

\vskip5mm
\section{Bi-Koszul Modules}\label{sect}

We mentioned a kind of modules whose degree distribution of
generators in the resolution obeys some resolution map $\varDelta$
in Proposition \ref{prop3}. In this section, we turn to study such
modules which are defined over the bi-Koszul algebras.

\begin{defn}Let $A$ be a bi-Koszul algebra. Assume $M\in Gr(A)$, we
call $M$ a {\it bi-Koszul module\/} if it has a minimal graded
projective resolution of the form
$$
\mathcal{Q}:\quad \cdots \rightarrow Q_n \stackrel{\partial_n}
\rightarrow \cdots \rightarrow Q_2\stackrel{\partial_2}\rightarrow
Q_1\stackrel{\partial_1}\rightarrow
Q_0\stackrel{\partial_0}\rightarrow M\rightarrow 0,
$$
in which each
$Q_n$ is generated in degrees $\varDelta(n)$.
\end{defn}

\begin{remark}
If $M$ is a bi-Koszul module, then $M\in Gr_0(A)$ and for each
$n\geq0$, $\Omega^{3n}(M)(-2nd)$ is also a bi-Koszul module.
\end{remark}

\begin{theorem}\label{thmm1}
Let $A$ be a bi-Koszul algebra, $M\in Gr_0(A)$. Then we have:
\begin{enumerate}
\item  if $\mathsf{E}(M)$ is generated in degree 0 as an $E(A)$-module,
then $M$ is a bi-Koszul module;
\item  if $M$ is a bi-Koszul module, then $\mathsf{E}(M)$ is
generated in degree 0 if and only if $\Omega^2(JM)(-2d)$ is also a
bi-Koszul module.
\end{enumerate}
\end{theorem}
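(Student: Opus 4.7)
For part (1), the plan is to translate the hypothesis into a support statement for each $\mathsf{E}^n(M)$ and then appeal to Lemma \ref{le}. The assumption gives $\mathsf{E}^n(M) = E^n(A)\mathsf{E}^0(M)$ for every $n$; bi-Koszulity of $A$ forces $E^n(A)$ to be supported in $\varDelta(n)$, while $M \in Gr_0(A)$ concentrates $\mathsf{E}^0(M)$ in degree $0$. Since the Yoneda product adds second degrees, $\mathsf{E}^n(M)$ must be supported in $\varDelta(n)$, and Lemma \ref{le} then concludes that $Q_n$ is generated in degrees $\varDelta(n)$; this is exactly the bi-Koszul module condition.

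For part (2), the main tool is the long exact sequence obtained by applying $\underline{\Ext}^*_A(-,k)$ to $0 \to JM \to M \to M/JM \to 0$, combined with Lemma \ref{le2} and Proposition \ref{prop3}. For $(\Rightarrow)$, Lemma \ref{le2} identifies $\mathrm{Im}(f_n)$ with $E^n(A)\mathsf{E}^0(M)$, so the hypothesis ``$\mathsf{E}(M)$ generated in degree $0$'' is equivalent to surjectivity of every $f_n: \mathsf{E}^n(M/JM) \to \mathsf{E}^n(M)$, hence to the vanishing of every connecting map $g_n: \mathsf{E}^n(M) \to \mathsf{E}^n(JM)$. Exactness then produces an injection $\mathsf{E}^m(JM) \hookrightarrow \mathsf{E}^{m+1}(M/JM) = E^{m+1}(A)$ for all $m \ge 0$, placing $\mathsf{E}^m(JM)$ in $\varDelta(m+1)$. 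By Lemma \ref{le}, $P_m(JM)$ is generated in $\varDelta(m+1)$; specialising at $m = 2$ makes $\Omega^2(JM)$ generated in degree $2d$, so $\Omega^2(JM)(-2d) \in Gr_0(A)$, and the periodicity $\varDelta(n+3) = \varDelta(n) + (2d,2d)$ converts the remaining generating degrees of $P_{n+2}(JM)$ into the bi-Koszul degrees $\varDelta(n)$ for $\Omega^2(JM)(-2d)$.

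For $(\Leftarrow)$, the plan is to reverse this flow and feed the result into Proposition \ref{prop3}. Bi-Koszulity of $\Omega^2(JM)(-2d)$, via Lemma \ref{le} and the same periodicity, forces $\mathsf{E}^m(JM)$ to be supported in $\varDelta(m+1)$ for all $m \ge 2$; in particular $\mathsf{E}^{3n+2}(JM)$ sits in the single degree $(2n+2)d$, and since $d \ge 2$ makes $(2n+2)d \ne 2nd + d + 1$, the obstruction term $\mathsf{E}^{3n+2}_{2nd+d+1}(JM)$ in Proposition \ref{prop3} vanishes, giving $\mathsf{E}^{3n+2}(M) = E^{3n+2}(A)\mathsf{E}^0(M)$. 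The subtlest point to treat carefully is the pure components $\mathsf{E}^{3n}(M)$ and $\mathsf{E}^{3n+1}(M)$, which Proposition \ref{prop3} does not state directly: the ``ends-in-zero'' short exact sequences extracted in its proof (for instance $0 \to \mathsf{E}^{3n-1}_{2nd}(JM) \to \mathsf{E}^{3n}_{2nd}(M/JM) \to \mathsf{E}^{3n}_{2nd}(M) \to 0$) already force $\mathsf{E}^{3n}(M) = E^{3n}(A)\mathsf{E}^0(M)$, and the same argument at index $3n+1$ yields $\mathsf{E}^{3n+1}(M) = E^{3n+1}(A)\mathsf{E}^0(M)$; collecting these pieces produces $\mathsf{E}(M) = E(A)\mathsf{E}^0(M)$.
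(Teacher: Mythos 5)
Your proposal is correct and takes essentially the same route as the paper: part (1) is the paper's argument verbatim, and part (2) rests on the same ingredients --- the long exact sequence from $0\to JM\to M\to M/JM\to 0$, Lemma \ref{le2}, and the support computation of $\mathsf{E}(JM)$ carried out in the proof of Proposition \ref{prop3} --- merely unpacking the paper's single chain of equivalences into the two separate implications.
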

\begin{proof}
(1) By the assumption, for each $n\geq 0$, $\mathsf{E}^n(M)=
E^n(A)\mathsf{E}^0(M)$, so
$$
\mathsf{E}^n(M)=E^n_{\varDelta(n)}(A)\mathsf{E}^0(M) =
\mathsf{E}^n_{\varDelta(n)}(M).
$$
Since $A$ is a bi-Koszul algebra, the result follows by Lemma
\ref{le}.

(2) When $M$ is a bi-Koszul module, the proof of Proposition
\ref{prop3} shows
\begin{eqnarray*}
\mathsf{E}(JM)&=&\mathsf{E}^0_1(JM)\oplus
\mathsf{E}^1_{d, d+1}(JM)\oplus \mathsf{E}^2_{d+1, 2d}(JM)\oplus
\mathsf{E}^3_{2d+1}(JM)\\ \quad & &\oplus
\mathsf{E}^4_{3d, 3d+1}(JM)\oplus \mathsf{E}^5_{3d+1, 4d}(JM)
\oplus \mathsf{E}^6_{4d+1}(JM)\oplus \cdots.
\end{eqnarray*}

By Lemma \ref{le2}, for all $n\geq 0$,
$\mathsf{E}^n(M)=E^n(A)\mathsf{E}^0(M)$ if and only if the induced
map $\mathsf{E}(M/JM)\rightarrow \mathsf{E}(M)$ is an epimorphism,
which is equivalent to
$\mathsf{E}^2_{d+1}(JM)=\mathsf{E}^5_{3d+1}(JM)=\cdots=0$. So we
have $\mathsf{E}^n(M)=E^n(A)\mathsf{E}^0(M)$ (for all $n\geq0$) if
and only if $\Omega^2(JM)(-2d)$ is also a bi-Koszul module.
\end{proof}

We discuss  some conditions under which $\Omega^2(JM)(-2d)$
becomes a bi-Koszul module.

Firstly, we consider a kind of special bi-Koszul modules.

\begin{defn}
Let $A$ be a bi-Koszul algebra, $M\in Gr_0(A)$ a bi-Koszul module.
We call $M$ {\it strongly\/} if, for all $n\geq 0$,
$J\Omega^{3n+2}(M)=J\Omega^{3n+2}(M/JM)\cap \Omega^{3n+2}(M)$.
\end{defn}

The following two lemmas are well-known.

\begin{lemma}\label{le4} For any graded algebra $A$ with Jacobson radical
$J$. Let $0\rightarrow K\rightarrow M \rightarrow N \rightarrow 0$
be a short exact sequence in $Gr(A)$ such that $JK=K\cap JM$, then
there exists a commutative diagram: {\small
$$
\begin{array}{ccccccccccc}&  & 0 &  & 0 &  & 0 &  &  \\
& & \downarrow  &  & \downarrow  & &  \downarrow & &  \\
0 & \to &\Omega(K) &  \to & \Omega(M)  & \to& \Omega(N) & \to & 0 \\
&  &  \downarrow  &  &  \downarrow  &  & \ \downarrow  &  &  \\
0& \to & Q &   \to & R & \to &L & \to & 0 \\
&  & \downarrow &  & \downarrow &  & \downarrow&  &  \\
0 & \to & K  &   \to & M & \to & N & \to & 0 \\
&  & \downarrow &  & \downarrow &  & \downarrow &  &  \\
&  & 0 &  & 0 &  & 0 &  &\\
\end{array}
$$
}
where $Q\rightarrow K$, $R\rightarrow M$ and $L\rightarrow N$ are
projective covers.\qed
\end{lemma}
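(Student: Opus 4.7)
The plan is to establish this as a horseshoe-style construction for projective covers, with the condition $JK = K\cap JM$ playing the role of ensuring that minimality is preserved when glueing the covers of $K$ and $N$ into a cover of $M$. The three columns will be the projective-cover short exact sequences for $K$, $M$, $N$, while the middle row is produced by taking $R$ to be the direct sum of the covers of $K$ and $N$ and the top row is then forced by the snake lemma.

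Concretely, I would first fix projective covers $p\colon Q\twoheadrightarrow K$ and $q\colon L\twoheadrightarrow N$, with kernels $\Omega(K)$ and $\Omega(N)$ respectively. Since $L$ is projective and $M\twoheadrightarrow N$ is surjective, lift $q$ to a map $\tilde q\colon L\to M$. Set $R:=Q\oplus L$ and define $\pi\colon R\to M$ by $\pi(x,y)=\iota(p(x))+\tilde q(y)$, where $\iota\colon K\hookrightarrow M$. A short diagram chase shows $\pi$ is surjective: given $m\in M$, write its image in $N$ as $q(y)$ for some $y\in L$; then $m-\tilde q(y)\in K$ lifts through $p$. The key step is to verify that $\pi$ is a \emph{projective cover}, i.e.\ $\pi$ induces an isomorphism $R/JR\xrightarrow{\cong} M/JM$. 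The hypothesis $JK=K\cap JM$ is exactly what is needed here: it gives that $K/JK\hookrightarrow M/JM$, so the canonical sequence
\[
0\longrightarrow K/JK\longrightarrow M/JM\longrightarrow N/JN\longrightarrow 0
\]
is short exact; being a sequence of semisimple $A/J$-modules it splits, so $M/JM\cong K/JK\oplus N/JN\cong Q/JQ\oplus L/JL = R/JR$, and one checks $\pi$ realises this identification.

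With the middle projective-cover column in hand, the middle row $0\to Q\to R\to L\to 0$ is split exact by construction, and the bottom row is the given short exact sequence. Applying the snake lemma (or comparing kernels directly) to the surjective chain map from the middle row to the bottom row then yields the top row $0\to\Omega(K)\to\Omega(M)\to\Omega(N)\to 0$ and the commutativity of all squares. I expect the only substantive point to be the minimality check for $\pi$; once $JK=K\cap JM$ is used to split off $K/JK$ inside $M/JM$, everything else is a routine diagram chase.
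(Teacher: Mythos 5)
Your proof is correct: the paper states this lemma as well-known and supplies no proof, and your argument (the graded horseshoe construction $R=Q\oplus L$, with $JK=K\cap JM$ used exactly to make $0\to K/JK\to M/JM\to N/JN\to 0$ exact so that $R/JR\to M/JM$ is an isomorphism and $\pi$ is a genuine projective cover, followed by the snake lemma for the top row) is precisely the standard argument one would give. The one point worth stating explicitly is that $\ker\pi\subseteq JR$ is equivalent to the injectivity of $R/JR\to M/JM$, which your five-lemma/splitting comparison of the two rows modulo $J$ does establish.
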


\begin{lemma}\label{le5}
For any graded algebra $A$, let $0\rightarrow K\rightarrow M
\rightarrow N \rightarrow 0$ be a short exact sequence in $Gr(A)$.
If $K$, $M$ and $N$ are generated in one same degree, then $JK=K\cap
JM$.\qed
\end{lemma}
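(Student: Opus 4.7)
My plan is to prove the two inclusions $JK \subseteq K \cap JM$ and $K \cap JM \subseteq JK$ separately. The first inclusion is formal: $JK \subseteq K$ is immediate, and $JK \subseteq JM$ follows from $K \subseteq M$. No grading hypothesis is needed here.

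For the reverse inclusion I would exploit the hypothesis that $K$ and $M$ are both generated in a common single degree, say $s$. Being generated in degree $s$ forces $K_i = M_i = 0$ for $i < s$ together with $K = A \cdot K_s$ and $M = A \cdot M_s$. From $M = A \cdot M_s$ one deduces $JM = A_{\geq 1} \cdot M = A_{\geq 1} M_s$, which is supported in degrees $\geq s+1$. Since $K \cap JM$ is a graded submodule of $M$, it suffices to take a homogeneous $x \in K \cap JM$; by the previous observation its internal degree satisfies $i \geq s+1$. Now apply $K = A \cdot K_s$ in degree $i$: one gets $K_i = A_{i-s} K_s \subseteq A_{\geq 1} K \subseteq JK$, so $x \in JK$, as required.

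I do not anticipate a real obstacle: the argument is pure grading bookkeeping once one unpacks what ``generated in degree $s$'' means. The only point worth flagging is that the hypothesis on $N$ is never actually invoked in the argument, and is in fact automatic from the hypotheses on $K$ and $M$: since both $M$ and $K$ are supported in degrees $\geq s$ with $M = A \cdot M_s$, the quotient $N = M/K$ is generated by the image of $M_s$ in degree $s$. Thus including $N$ in the statement is a matter of symmetry and compatibility with the intended application in Lemma \ref{le4}, rather than an essential assumption.
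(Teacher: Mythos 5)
Your argument is correct: the inclusion $JK\subseteq K\cap JM$ is formal, and the reverse inclusion follows exactly as you say from $JM=A_{\geq 1}M_s$ being supported in degrees $\geq s+1$ while $K_i=A_{i-s}K_s\subseteq JK$ for $i>s$. The paper states this lemma as well-known and gives no proof, so there is nothing to diverge from; your grading bookkeeping is the standard argument the authors intend, and your observation that the hypothesis on $N$ is automatic from those on $K$ and $M$ is also accurate.
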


\begin{theorem}\label{thm6}
Let $A$ be a bi-Koszul algebra. Assume that $M\in Gr_0(A)$ is a
strongly bi-Koszul module. Then $\Omega^2(JM)(-2d)$ is a bi-Koszul
module. Hence, $\mathsf{E}(M)$ is generated in degree $0$ as an
$E(A)$-module.
\end{theorem}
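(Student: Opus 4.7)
The plan is to establish, by induction on $m \geq 1$, a short exact sequence
\[
0 \to \Omega^m(M) \to \Omega^m(M/JM) \to \Omega^{m-1}(JM) \to 0,
\]
from which the generation degrees of every syzygy of $JM$ can be read off. Once these are in hand, the bi-Koszulity of $\Omega^2(JM)(-2d)$ is routine bookkeeping, and the concluding assertion follows from Theorem \ref{thmm1}(2).

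For the base case $m=1$, the projective cover $Q_0 \twoheadrightarrow M$ also serves as the projective cover of $M/JM$ (they share the same degree-$0$ generators), so factoring $Q_0 \to M \to M/JM$ and taking kernels produces the exact sequence $0 \to \Omega^1(M) \to \Omega^1(M/JM) \to JM \to 0$ with $\Omega^1(M/JM) = JQ_0$ and all three terms generated in degree $1 = \varDelta(1)$. For the inductive step from $m$ to $m+1$ I would verify the hypothesis $J\Omega^m(M) = \Omega^m(M) \cap J\Omega^m(M/JM)$ of Lemma \ref{le4} by a dichotomy on $m \pmod 3$. When $m \not\equiv 2 \pmod 3$ the set $\varDelta(m)$ is a singleton, and since $\Omega^{m-1}(JM)$ is a quotient of $\Omega^m(M/JM)$ its generation degrees lie in that singleton too; thus all three terms of the stage-$m$ sequence are generated in one and the same degree, and Lemma \ref{le5} delivers the required equality. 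When $m = 3n+2$, the strongly bi-Koszul hypothesis $J\Omega^{3n+2}(M) = J\Omega^{3n+2}(M/JM) \cap \Omega^{3n+2}(M)$ is precisely this equality. In either case Lemma \ref{le4} yields the stage-$(m+1)$ sequence as the top row of its $3\times 3$ diagram, and $\Omega^m(JM)$ emerges as a quotient of $\Omega^{m+1}(M/JM)$. Inductively, $\Omega^k(JM)$ is generated in $\varDelta(k+1)$ for every $k \geq 0$.

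To finish, the $n$-th projective in the minimal resolution of $\Omega^2(JM)$ is the projective cover of $\Omega^{n+2}(JM)$, hence generated in $\varDelta(n+3) = \varDelta(n) + (2d, 2d)$ by the three-periodicity of the resolution map. After the shift by $(-2d)$, the $n$-th term of the resulting minimal resolution of $\Omega^2(JM)(-2d)$ is generated in $\varDelta(n)$, so this module satisfies Definition \ref{bk}. Theorem \ref{thmm1}(2) then concludes that $\mathsf{E}(M)$ is generated in degree $0$ as an $E(A)$-module.

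The main obstacle is to keep the indexing honest: the three-periodicity of $\varDelta$ must be used to guarantee that in each period the unique non-pure stage $m \equiv 2 \pmod 3$ is covered by the strongly bi-Koszul condition, while the two remaining pure stages are handled by Lemma \ref{le5}, so that the inductive machine never stalls at a step where neither hypothesis is available.
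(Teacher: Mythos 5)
Your proposal is correct and follows essentially the same route as the paper: starting from $0\to JM\to M\to M/JM\to 0$, iterating Lemma \ref{le4} with the hypothesis supplied by Lemma \ref{le5} at the pure stages and by the strongly bi-Koszul condition at the stages $m\equiv 2\ (\mathrm{mod}\ 3)$, and then reading off degrees. Your write-up merely makes explicit the induction and the mod-$3$ dichotomy that the paper compresses into ``projective covers sheaf by sheaf from the lemmas \ref{le4} and \ref{le5}.''
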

\begin{proof}
Clearly, $M/JM$ is also a bi-Koszul module over $A$. For the short
exact sequence
$$
0\rightarrow JM\rightarrow M \rightarrow M/JM \rightarrow 0,
$$
we have the commutative diagram as follows: {\small
$$
\begin{array}{ccccccccccc}
& && & 0 &  & 0 &  & & &  \\
&  & &&\downarrow  &  & \downarrow  &   &  &  &  \\
&&0 & \to &\Omega^1(M) &  \to & \Omega^1(M/JM)  & \to& JM & \to & 0 \\
& && &  \downarrow  &  &  \downarrow  &  &  &  &  \\
&&& & Q_0 &   \to & Q_0 &  & &  &  \\
& && & \downarrow &  & \downarrow &  & &  &  \\
0 & \to & JM  &   \to & M & \to & M/JM & \to & 0&& \\
& && & \downarrow &  & \downarrow &  &  &  &  \\
& && & 0 &  & 0 .&  &  &  &\\
\end{array}
$$
}

Note that $M$ is strongly bi-Koszul, starting from the short exact
sequence
$$
0 \rightarrow\Omega^1(M) \rightarrow\Omega^1(M/JM) \rightarrow
JM \rightarrow 0,
$$
we may get projective covers sheaf by sheaf from the lemmas
\ref{le4} and \ref{le5}. Hence, for all $n \geq 1$,
$$
0\rightarrow\Omega^n(M)\rightarrow\Omega^n(M/JM)\rightarrow\Omega^{n-1}(
JM)\rightarrow 0
$$
are exact, where $\Omega^n(M)$, $ \Omega^n(M/JM)$ and
$\Omega^{n-1}(JM)$  are all generated in the same degrees. So
$\Omega^2(JM)(-2d)$ is a bi-Koszul module by comparing with
degrees.
\end{proof}

%\textbf{(Consider: 1. From above theorem, $E(M)$ is generated by
%$E^0(M)$ if $M$ is strongly,  checking whether it is strongly in
%Theorem \ref{thmm1} (1)? 2. If $A$ itself is strongly, what can we
%imply for bi-Koszul modules?)}

The following result is evident from the proof of Theorem
\ref{thm6}.

\begin{corollary}\label{cor3}
Under the condition of Theorem \ref{thm6}, we have, for all $n \geq 1$,
$$
0\rightarrow \mathsf{E}^{n-1}(JM) \rightarrow
\mathsf{E}^n(M/JM)\rightarrow \mathsf{E}^n( M)\rightarrow 0
$$
are exact, where $\mathsf{E}^{n-1}(JM)$, $\mathsf{E}^n(M/JM)$ and
$\mathsf{E}^n(M)$ are all supported in the same degrees.
\end{corollary}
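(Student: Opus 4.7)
The plan is to derive the short exact sequences from the long exact Ext sequence associated to $0\to JM\to M\to M/JM\to 0$, using the conclusions of Theorem \ref{thm6} to kill the obstructing connecting maps.

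First I would verify the common-support claim. The proof of Theorem \ref{thm6} produces, for every $n\geq 1$, a short exact sequence of syzygies
$$
0\rightarrow\Omega^n(M)\rightarrow\Omega^n(M/JM)\rightarrow\Omega^{n-1}(JM)\rightarrow 0
$$
whose three terms are generated in exactly the same degrees. Lemma \ref{le} then translates this directly into the assertion that $\mathsf{E}^n(M)$, $\mathsf{E}^n(M/JM)$ and $\mathsf{E}^{n-1}(JM)$ are supported in the same set of second degrees.

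Next I would establish the exactness. Applying $\underline{\Ext}_A^*(-,k)$ to the short exact sequence $0\rightarrow JM\rightarrow M\rightarrow M/JM\rightarrow 0$ produces the long exact sequence
$$
\cdots\rightarrow \mathsf{E}^{n-1}(M)\rightarrow \mathsf{E}^{n-1}(JM)\rightarrow \mathsf{E}^n(M/JM)\stackrel{f_n}\rightarrow \mathsf{E}^n(M)\stackrel{g_n}\rightarrow \mathsf{E}^n(JM)\rightarrow \cdots.
$$
Theorem \ref{thm6} gives $\mathsf{E}(M)=E(A)\mathsf{E}^0(M)$, while Lemma \ref{le2} identifies the image of $f_n$ with $E^n(A)\mathsf{E}^0(M)$; together these force $f_n$ to be surjective for every $n\geq 0$. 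Surjectivity of $f_n$ immediately yields $g_n=0$, and surjectivity of $f_{n-1}$ (for $n\geq 1$) makes the preceding connecting morphism $\mathsf{E}^{n-1}(M)\rightarrow \mathsf{E}^{n-1}(JM)$ vanish, which is precisely the injectivity of $\mathsf{E}^{n-1}(JM)\rightarrow \mathsf{E}^n(M/JM)$. Splicing these facts yields the claimed short exact sequences for every $n\geq 1$.

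I do not foresee a real obstacle here; once the input from Theorem \ref{thm6} and Lemma \ref{le2} is in place, the argument is pure bookkeeping on the long exact sequence. The only low-index point worth noting is that $f_0$ is tautologically surjective because $E^0(A)\mathsf{E}^0(M)=\mathsf{E}^0(M)$, so the inductive use of the surjectivity of $f_{n-1}$ is well-founded from $n=1$ upward.
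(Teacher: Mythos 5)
Your argument is correct, but it reaches the exactness by a different route than the one the paper intends. The paper dismisses the proof as ``evident from the proof of Theorem \ref{thm6}'': there, Lemma \ref{le4} produces at each stage a split short exact sequence of projective covers $0\rightarrow Q_n\rightarrow R_n\rightarrow L_{n-1}\rightarrow 0$ sitting over $0\rightarrow\Omega^n(M)\rightarrow\Omega^n(M/JM)\rightarrow\Omega^{n-1}(JM)\rightarrow 0$, and applying $\underline{\Hom}_A(-,k)$ to the split sequence of projectives (using minimality to identify $\underline{\Hom}_A(Q_n,k)$ with $\mathsf{E}^n(M)$, etc.) gives both the exactness and the support statement in one stroke. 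You instead take the long exact $\underline{\Ext}$ sequence of $0\rightarrow JM\rightarrow M\rightarrow M/JM\rightarrow 0$ and kill the connecting maps by combining the conclusion of Theorem \ref{thm6} (that $\mathsf{E}(M)=E(A)\mathsf{E}^0(M)$) with Lemma \ref{le2} (which identifies $\mathrm{Im}(f_n)$ with $E^n(A)\mathsf{E}^0(M)$), handling the support claim separately via the syzygy sequences and Lemma \ref{le}. Both arguments are sound; yours has the advantage of leaning mostly on the \emph{statement} of Theorem \ref{thm6} rather than its internal construction (only the support claim still needs the syzygy sequences from its proof), and it matches the technique the paper itself uses in Theorem \ref{thmm1}(2), while the paper's route is shorter because the splitting of the projective-cover sequences delivers everything at once. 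Your remark about $f_0$ being surjective since $E^0(A)\mathsf{E}^0(M)=\mathsf{E}^0(M)$ correctly anchors the low-index case.
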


The following proposition gives another criterion for a bi-Koszul
algebra to be strongly by the notion of bi-Koszul modules.
\begin{prop}
Let $A$ be a bi-Koszul algebra. Assume, for all $n\geq 1$,
$\Omega^{3n}(k)(-2nd)$ are strongly bi-Koszul modules. Then $A$ is a
strongly bi-Koszul algebra.
\end{prop}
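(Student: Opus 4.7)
The plan is to derive the vanishing of the obstruction $\mathsf{E}^2_{2nd+d+1}(J\Omega^{3n}(k))$ for every $n\geq 1$, which by Theorem \ref{thm1}(c) is exactly the strongly-bi-Koszul condition on $A$. To do this I would leverage the hypothesis through Theorem \ref{thm6}, which converts ``strongly bi-Koszul module'' into ``Koszul dual generated in degree $0$'', applied to the shifted syzygy modules $M_n:=\Omega^{3n}(k)(-2nd)$.

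First I would check that each $M_n$ is a bi-Koszul module living in $Gr_0(A)$. Since $A$ is bi-Koszul, the minimal resolution $\mathcal{P}$ of $k$ has $P_{3n+i}$ generated in degrees $\varDelta(3n+i)$; the periodicity identity $\varDelta(3n+i)-(2nd,2nd)=\varDelta(i)$, immediate from the three-cases definition of $\varDelta$, means the shifted tail
$$
\cdots\to P_{3n+2}(-2nd)\to P_{3n+1}(-2nd)\to P_{3n}(-2nd)\to M_n\to 0
$$
is a minimal resolution with $P_{3n+i}(-2nd)$ generated in degrees $\varDelta(i)$. Hence $M_n$ is indeed a bi-Koszul module, and by hypothesis it is strongly bi-Koszul. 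Applying Theorem \ref{thm6} gives that $\mathsf{E}(M_n)$ is generated in degree $0$ as an $E(A)$-module; in particular $\mathsf{E}^2(M_n)=E^2(A)\,\mathsf{E}^0(M_n)$. On the other hand, Proposition \ref{prop3} applied to $M_n\in Gr_0(A)$ (at its index $n=0$, since $M_n$ is $0$-generated) yields the splitting
$$
\mathsf{E}^2(M_n)\cong E^2(A)\,\mathsf{E}^0(M_n)\oplus \mathsf{E}^2_{d+1}(JM_n).
$$
Comparing the two expressions forces $\mathsf{E}^2_{d+1}(JM_n)=0$, and unshifting by $2nd$ rewrites this as $\mathsf{E}^2_{2nd+d+1}(J\Omega^{3n}(k))=0$, which is the desired vanishing.

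I do not anticipate any genuinely hard step; the argument is essentially a packaging of Theorems \ref{thm6} and \ref{thm1}. The one thing that needs care is the bookkeeping of second degrees: one must apply Proposition \ref{prop3} to the shifted object $M_n$ rather than to $\Omega^{3n}(k)$ itself (so that $M_n$ is generated in degree $0$ and the ``$d+1$'' in the obstruction label is correct), and then reconcile the result with the ``$2nd+d+1$'' appearing in Theorem \ref{thm1}(c). Once this shift is tracked, the proof reduces to the single displayed comparison above.
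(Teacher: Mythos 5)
Your proposal is correct and follows essentially the same route as the paper: both apply Theorem \ref{thm6} to the shifted syzygies $M_n=\Omega^{3n}(k)(-2nd)$ to get $\mathsf{E}^2(M_n)=E^2(A)\mathsf{E}^0(M_n)$, and then conclude that the obstruction $\mathsf{E}^2_{2nd+d+1}(J\Omega^{3n}(k))$ vanishes. Your version merely makes explicit (via the splitting in Proposition \ref{prop3}) the last comparison that the paper leaves implicit when it writes $E^{3n+2}(A)=E^2(A)E^{3n}(A)$.
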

\begin{proof}
By Proposition \ref{propp}, we know that
$$
E^{3n}(A)=\underbrace{E^3(A)E^3(A)\cdots E^3(A)}_n
$$
and
$$
E^{3n+1}(A) = E^1(A)E^{3n}(A).
$$
Denote $M=\Omega^{3n}(k)(-2nd)$, which is a strongly bi-Koszul
module. So by Theorem \ref{thm6}, we get
$$
\mathsf{E}^2(M)=E^2(A)\mathsf{E}^0(M);
$$
that is,
$$
E^{3n+2}(A)=E^2(A)E^{3n}(A).
$$
Thus, we complete the proof.
\end{proof}

We end this section with a result about the relations between
bi-Koszul modules and Koszul modules.

Denote $\mathsf{E}^{[0]}(M)=\oplus_{n\geq0}\mathsf{E}^{3n}(M)$, and
$E^{[0]}(A)=\oplus_{n\geq0}E^{3n}(A)$. Clearly,
$\mathsf{E}^{[0]}(M)$ is a left $E^{[0]}(A)$-module. Denote $M_0:=M$
and $ M_n:=\Omega^2(JM_{n-1})(-2d)$ for $n\geq 1$.
\begin{prop}
Let $A$ be a bi-Koszul algebra. Assume that all $M_n \;(n\geq 0)$
are strongly bi-Koszul modules. Then $\mathsf{E}^{[0]}(M)$ is a
Koszul module over $E^{[0]}(A)$.
\end{prop}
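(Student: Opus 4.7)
The plan is to exhibit an explicit minimal linear free resolution of $\mathsf{E}^{[0]}(M)$ as an $E^{[0]}(A)$-module. Throughout, I grade $E^{[0]}(A)$ and $\mathsf{E}^{[0]}(-)$ by placing $E^{3n}(A)$ and $\mathsf{E}^{3n}(-)$ in new-degree $n$ (equivalently, in internal degree $2nd$); in this grading $E^{[0]}(A)$ is a connected graded algebra generated in degree $1$ by $E^3(A)=E^3_{2d}(A)$, so Koszulity of $\mathsf{E}^{[0]}(M)$ amounts to the existence of a resolution by free modules whose $n$-th term is generated in new-degree $n$.

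Since $M=M_0$ is strongly bi-Koszul, Corollary \ref{cor3} applied at homological index $3t+3$ (for each $t\ge 0$) gives
$$
0 \to \mathsf{E}^{3t+2}(JM) \to \mathsf{E}^{3t+3}(M/JM) \to \mathsf{E}^{3t+3}(M) \to 0.
$$
The middle term equals $E^{3t+3}(A)\otimes_k \mathsf{E}^0(M)$ because $M/JM$ is a trivial module, and the identity $M_1=\Omega^2(JM)(-2d)$ combined with $\mathsf{E}^k(N(s))=\mathsf{E}^k(N)(s)$ yields $\mathsf{E}^{3t+2}(JM)\cong \mathsf{E}^{3t}(M_1)(2d)$. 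Summing over $t\ge 0$ and prepending $\mathsf{E}^0(M)\cong E^0(A)\otimes_k\mathsf{E}^0(M)$ produces an $E^{[0]}(A)$-linear short exact sequence
$$
0 \to \mathsf{E}^{[0]}(M_1)(2d) \to E^{[0]}(A)\otimes_k \mathsf{E}^0(M) \to \mathsf{E}^{[0]}(M) \to 0.
$$

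By hypothesis $M_1, M_2, \dots$ are likewise strongly bi-Koszul, so applying the same construction to each $M_n$ and shifting internally by $2nd$ yields, for every $n\ge 0$,
$$
0 \to \mathsf{E}^{[0]}(M_{n+1})(2(n+1)d) \to E^{[0]}(A)\otimes_k \mathsf{E}^0(M_n)(2nd) \to \mathsf{E}^{[0]}(M_n)(2nd) \to 0.
$$
Splicing these short exact sequences produces a graded free resolution
$$
\cdots \to E^{[0]}(A)\otimes_k \mathsf{E}^0(M_n)(2nd) \to \cdots \to E^{[0]}(A)\otimes_k \mathsf{E}^0(M) \to \mathsf{E}^{[0]}(M) \to 0
$$
whose $n$-th free module is generated in new-degree $n$. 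Such a linear resolution is automatically minimal (differentials send degree-$n$ generators into the augmentation ideal of $E^{[0]}(A)$ acting on the degree-$(n{-}1)$ generators), hence $\mathsf{E}^{[0]}(M)$ is a Koszul $E^{[0]}(A)$-module.

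The main technical obstacle is confirming that at every step the identifications on the left of the short exact sequence are isomorphisms of $E^{[0]}(A)$-modules, not merely of $k$-vector spaces. This reduces to showing that the Yoneda left action of $E(A)$ is compatible with the dimension-shift isomorphism $\mathsf{E}^{k+2}(JM_n)\cong \mathsf{E}^k(M_{n+1})(2d)$; this is a standard naturality argument using that $M_{n+1}(2d)=\Omega^2(JM_n)$ as submodules of the projective resolution, but care must be taken to track the internal shift by $2d$ through each splice and to verify that the connecting maps in the long exact Ext sequence commute with cup products.
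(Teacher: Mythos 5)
Your proof is correct and follows essentially the same route as the paper: both apply Corollary \ref{cor3} to produce the short exact sequences $0\to\mathsf{E}^{3n-1}(JM)\to\mathsf{E}^{3n}(M/JM)\to\mathsf{E}^{3n}(M)\to 0$, identify the direct sum of middle terms as the free module $E^{[0]}(A)\otimes_k\mathsf{E}^0(M)$ and the kernel as a shift of $\mathsf{E}^{[0]}(M_1)$, and iterate over the $M_n$ to build a linear resolution. The $E^{[0]}(A)$-linearity of the dimension-shift identification that you flag as the remaining technical point is likewise left implicit in the paper's own proof, so you have not omitted anything the authors supply.
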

\begin{proof}
By Corollary \ref{cor3}, there exist short exact sequences:
$$
0\rightarrow \mathsf{E}^{3n-1}(JM) \rightarrow \mathsf{E}^{3n}(M/JM)\rightarrow
\mathsf{E}^{3n}(M)\rightarrow 0
$$
for all $n\geq 0$ (we have denoted $\mathsf{E}^{-1}(JM)=0$). Hence
$$
0\rightarrow \bigoplus_{n\geq 1}\mathsf{E}^{3n-3}(\Omega^2(JM))
\rightarrow \bigoplus_{n\geq 0}\mathsf{E}^{3n}(M/JM)\rightarrow
\bigoplus_{n\geq 0}\mathsf{E}^{3n}(M)\rightarrow0;
$$
that is,
$$
0\rightarrow
\bigoplus_{n\geq 0}\mathsf{E}^{3n}(\Omega^2(JM)(-2d))(-1)
\rightarrow \bigoplus_{n\geq 0}\mathsf{E}^{3n}(M/JM)\rightarrow
\bigoplus_{n\geq 0}\mathsf{E}^{3n}(M)\rightarrow 0.
$$
Since $\Omega^2(JM)(-2d)$ is a strongly bi-Koszul module,
$\bigoplus_{n\geq 0}\mathsf{E}^{3n}(\Omega^2(JM)(-2d))(-1)$ $=
\Omega^1(\mathsf{E}^{[0]}(M))$ is 1-generated by Theorem
\ref{thm6}. Continuing this fashion, we see that
$\mathsf{E}^{[0]}(M)$ has a linear resolution. The result now
follows.
\end{proof}

\vskip5mm
\section{Decomposition of Resolutions}\label{secti}

In this section, we discuss another topic that whether a minimal
resolution of a bi-Koszul module can be decomposed into two pure
resolutions. We end the article by providing a method of
constructing the strongly bi-Koszul algebras.

From now on, we assume that the graded algebra $A$ is noetherian and
connected, here ``connected'' means $k$ itself is a field. All
modules are considered finitely generated graded free A-modules with
their homogeneous bases. Denote $|x|$ the degree of a homogeneous
element $x\in M$ for $M\in gr(A)$.

Using the notation recalled in the preliminary section, we write
graded free $A$-modules as $$M= A(-m_1 , \cdots ,-m_s),\;\;\; N=
A(-n_1 , \cdots ,-n_t)$$ with all $m_i, n_j \in \mathbb{N}$
satisfying $m_1\leq \cdots \leq m_s$, and $n_1\leq \cdots \leq n_t$.
We choose homogeneous $A$-bases $\zeta$ and $\xi$ for $M$ and $N$
respectively, where
$$
\zeta= \left(\begin{array}{c} x_1\\ \vdots \\ x_s\\ \end{array}
\right),\;\; \xi=\left(\begin{array}{c} y_1 \\ \vdots \\ y_t \\
\end{array}\right),
$$
with $|x_i|= m_i$ and $|y_j|= n_j$.

For $f\in \Hom_{Gr(A)}(M, N)$, set $F=(a_{ij})\in
\mathbb{M}_{s\times t}(A)$ and say that $F$ is a {\it matrix
representation\/} of $f$ (with respect to the bases $(\zeta, \xi)$)
in case $f(\zeta)=F\xi$. In this case, we simply say that $(f, F)$
forms a {\it match}.

\begin{remark}
\begin{enumerate}
\item If $f\ne 0$, one can compute out that $|a_{ij}|=
m_i- n_j$ if $m_i\geq n_j$; and $a_{ij}=0$ if $m_i<n_j$.
\item There exists a nonzero entry in each row of
$F$, all nonzero entries in $F$ are homogeneous.
\end{enumerate}
\end{remark}

\begin{lemma}\label{le7}
Let $(f, F)$ be a match with respect to $(\zeta, \xi)$, assume
$F=\left(\begin{array}{cc} F' & 0\\ U &F''\\ \end{array} \right)$
with both $F'$ and $F''$ being nonzero. Then there exist a
commutative diagram:
$$
\begin{array}{ccclcrccc}
0 & \to & M' &  \to & M & \to& M''& \to & 0 \\
&  &  \downarrow f' &  &  \downarrow f &  & \ \downarrow f'' &  &  \\
0 & \to & N' &   \to & N & \to & N'' & \to & 0
\end{array}
$$
such that each row is split exact, and all $(f', F')$, $(f'', F'')$,
$(\lambda, U)$ are matches, where $\lambda\in\Hom_{Gr(A)}(M'', N')$.
\end{lemma}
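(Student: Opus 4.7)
The plan is to translate the block decomposition of $F$ directly into a direct-sum decomposition of $M$ and $N$, and then read off all the required data from the block form. Say $F'$ is an $s' \times t'$ submatrix, so that $F''$ has size $(s-s')\times(t-t')$ and $U$ has size $(s-s')\times t'$. Using the given $A$-bases, define graded free submodules $M' = A(-m_1,\ldots,-m_{s'})$ (spanned by $x_1,\ldots,x_{s'}$) and $M'' = A(-m_{s'+1},\ldots,-m_s)$ (spanned by $x_{s'+1},\ldots,x_s$), and similarly split $N = N' \oplus N''$ along $\xi$. The inclusions and projections are degree-zero graded morphisms, and the resulting split short exact sequences form the two rows of the diagram.

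For the vertical arrows, the key observation is that the vanishing of the upper-right block of $F$ forces $f(x_i) \in N'$ for $1 \leq i \leq s'$; hence $f$ restricts to a graded morphism $f' \colon M' \to N'$ whose matrix with respect to $(x_1,\ldots,x_{s'};\, y_1,\ldots,y_{t'})$ is exactly $F'$, so $(f',F')$ is a match. Post-composing $f$ with the projection $N \twoheadrightarrow N''$ kills the image of $M'$, hence factors through $M/M' \cong M''$ and gives $f'' \colon M'' \to N''$ with matrix $F''$, so $(f'',F'')$ is a match. Finally, define $\lambda \colon M'' \to N'$ by $\lambda(x_{s'+i}) = \sum_{j \leq t'} U_{ij}\, y_j$; since the entries of $U$ are drawn from a bona fide matrix representation of the graded morphism $f$, they are homogeneous with $|U_{ij}| = m_{s'+i} - n_j$ whenever nonzero, so $\lambda$ is a well-defined degree-zero graded morphism and $(\lambda, U)$ is a match.

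Commutativity of the two squares is then a direct check on basis elements. The left square amounts to $f \circ \iota_{M'} = \iota_{N'} \circ f'$, which is the content of the top block rows of $F$. The right square amounts to $\pi_{N''} \circ f = f'' \circ \pi_{M''}$, which is the content of the bottom block rows: for $i > s'$, projecting $f(x_i)$ to $N''$ discards the $U$-part and keeps the $F''$-part, matching $f''(x_i)$. The main obstacle is purely bookkeeping—one must make sure the basis enumerations on both sides remain consistent with the block partition and that the grading shifts are preserved under each restriction and corestriction—but both compatibilities follow automatically from the homogeneity and degree constraints built into the definition of a match.
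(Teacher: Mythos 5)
Your proposal is correct and follows essentially the same route as the paper: both split $M$ and $N$ along the block partition of the bases, take $f'$ to be the restriction $f|_{M'}$ (landing in $N'$ because the upper-right block vanishes), take $f''$ to be $f|_{M''}$ followed by the projection onto $N''$, and read off $\lambda$ from $U$. Your write-up is somewhat more explicit about homogeneity and the commutativity checks than the paper's, but there is no substantive difference.
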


\begin{proof} Let $F'\in \mathbb{M}_{s'\times t'}(A)$.
Set $\zeta=\left(\begin{array}{c} \zeta' \\ \zeta'' \\
\end{array}\right) $where $\zeta'$ has $s'$ rows, and $\xi=\left(\begin{array}{c}
\xi' \\ \xi'' \\\end{array} \right)$ where $\xi'$ has $t'$ rows. For
$x\in M$ with $x=X\zeta$, write $X=(X', X'')$ where $X'$ has  $s'$
columns. Since $f(x)=f(X\zeta)= XF\xi$, we have
$$
f(X'\zeta'+X''\zeta'')= X'F'\xi'+X''U\xi'+X''F''\xi''.
$$
Set $M'$, $M''$, $N'$, $N''$ to be free modules generated by
$\zeta'$, $\zeta''$, $\xi'$, $\xi''$, respectively, and
$f'=f|_{M'}$, $f''=P_{N''}\circ f|_{M''}$, $\lambda=P_{N'}\circ
f|_{M''}$, here $P|_{N'}: N\to N'$ and $P|_{N''}: N\to N''$ are
projective maps. Then the result follows.
\end{proof}

\begin{defn} Let notation be assumed as in the lemma above, suppose that for any
$X''$, there exists  $X'$ such that $X''U=X'F'$, then we call the
matrix $F$ {\it admissible}.
\end{defn}
\begin{remark}
\begin{enumerate} \item $F=\left(\begin{array}{cc} F' & 0\\ 0 &F''\\
\end{array}\right)$ is admissible.
\item If a match $(f, F)$ with $F$ admissible, then
$\mbox{Im}\lambda\subseteq \mbox{Im}f'$ from the proof of Lemma 4.1.
\end{enumerate}
\end{remark}

\begin{lemma}\label{le8} Assume that $M \stackrel{f}\rightarrow N
\stackrel{g}\rightarrow P$ is an  exact sequence, and $(f, F)$, $(g,
G)$ are matches
such that both $F =\left(\begin{array}{cc} F' & 0\\ U &F''\\
\end{array}
\right)$ and  $G=\left(\begin{array}{cc} G' & 0\\ V &G''\\
\end{array} \right)$ are admissible. Then, we have the following
commutative diagram:
{\small
$$
\begin{array}{cccccrccc}
&  & 0 &  & 0 &  & 0 &  &  \\
&  & \downarrow  &  & \downarrow  &   &  \downarrow &  &  \\
&  & M' & \stackrel{f'}\to & N' &\stackrel{g'} \to& P'&   &   \\
&  &  \downarrow  &  &  \downarrow &  &   \downarrow  &  &  \\
&  & M & \stackrel{f}\to &N & \stackrel{g}\to & P &  &   \\
&  & \downarrow &  & \downarrow &  & \downarrow&  &  \\
&  &M'' & \stackrel{f''}\to &  N'' & \stackrel{g''}\to & P'' &  &   \\
&  & \downarrow &  & \downarrow &  & \downarrow &  &  \\
&  & 0 &  & 0 &  & 0 &  &
\end{array}
$$
}
such that all rows are exact, and all columns are split exact.
\end{lemma}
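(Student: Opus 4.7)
\smallskip

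\noindent\textbf{Proof proposal.} The plan is to first apply Lemma~\ref{le7} to each of the matches $(f,F)$ and $(g,G)$ separately, so as to obtain the splittings $M=M'\oplus M''$, $N=N'\oplus N''$, $P=P'\oplus P''$, together with the induced maps $f',f'',g',g''$ and the ``off-diagonal'' maps $\lambda\in\Hom_{Gr(A)}(M'',N')$ with matrix $U$ and $\mu\in\Hom_{Gr(A)}(N'',P')$ with matrix $V$. The columns of the diagram are then split short exact by construction, and the commutativity of every square follows directly from the formulas in the proof of Lemma~\ref{le7} (restriction on the top, projection on the bottom). So the whole job reduces to verifying that the top row $M'\stackrel{f'}\to N'\stackrel{g'}\to P'$ and the bottom row $M''\stackrel{f''}\to N''\stackrel{g''}\to P''$ are exact.

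Next I would translate the composition $gf=0$ into a matrix identity. Computing in block form gives
$$
FG=\begin{pmatrix} F'G' & 0\\ UG'+F''V & F''G''\end{pmatrix}=0,
$$
so $F'G'=0$, $F''G''=0$ and $UG'+F''V=0$. The first two identities show at once that $g'f'=0$ and $g''f''=0$, i.e.\ the top and bottom rows are complexes. For exactness at $N'$, take $y'\in\ker g'$ and view it inside $N$; the middle row being exact produces $x=X'\zeta'+X''\zeta''\in M$ with $f(x)=y'$, which forces $X''F''=0$ and $X'F'+X''U=Y'$. The admissibility of $F$ supplies $\tilde X'$ with $X''U=\tilde X'F'$, and therefore $y'=(X'+\tilde X')F'\xi'$ lies in $\Im f'$. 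For exactness at $N''$, take $y''=Y''\xi''\in\ker g''$; then $g(y'')=Y''V\xi'$ lies in $P'$, and the admissibility of $G$ yields $Y'$ with $Y''V=Y'G'$. Thus $g(y''-y')=0$ with $y'=Y'\xi'\in N'$, and exactness of the middle row supplies $x\in M$ with $f(x)=y''-y'$. Extracting the $\xi''$-component gives $X''F''=Y''$, so $y''=f''(X''\zeta'')\in\Im f''$.

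Finally, the exactness of the columns is already built in from Lemma~\ref{le7}, and commutativity of all nine squares is automatic from the fact that $f',g'$ are literal restrictions while $f'',g''$ are composed with projections. The only delicate step in the argument is the exactness of the induced rows, where the key is that the admissibility hypothesis on $F$ and $G$ is exactly the right ingredient to absorb the off-diagonal blocks $U$ and $V$ into the diagonal blocks $F'$ and $G'$; without it one would only obtain complexes, not exact sequences. This is where the definition of \emph{admissible} earns its keep and is the main point to be careful about.
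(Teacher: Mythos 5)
Your proof is correct and follows essentially the same route as the paper's: apply Lemma~\ref{le7} to each of the two matches, observe that the columns and the commutativity come for free, and then use the admissibility of $F$ and $G$ together with the exactness of the middle row to absorb the off-diagonal blocks $U$ and $V$ and obtain exactness of the top and bottom rows. The only (harmless) differences are that you work in block-matrix coordinates via $FG=0$ rather than with the maps $\lambda_1,\lambda_2$, and at exactness in $N''$ you extract the $\xi''$-component of a preimage directly instead of first proving $\ker g''=\ker g\cap N''$ and $\mathrm{Im}\,f''=\mathrm{Im}\,f\cap N''$ as the paper does.
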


\begin{proof}
By Lemma \ref{le7} and the assumption, we obtain $M'$, $N'$, $ P' $,
$M''$, $ N''$ and $ P''$ such that three columns are split exact and
the diagram is commutative. Moreover, there exist $\lambda_1\in
\Hom_{Gr(A)}(M'', N')$, $\lambda_2\in \Hom_{Gr(A)}(N'', P')$ such
that
$$
f(x)=f'(x')+\lambda_1(x'')+f''(x''), \quad g(y)=g'(y')+\lambda_2(y'')+g''(y'')
$$
satisfying $\mbox{Im}\lambda_1\subseteq \mbox{Im}f'$ and
$\mbox{Im}\lambda_2\subseteq \mbox{Im}g'$.

Now we prove the first row is exact.

It is clear that $\ker g'=\ker g\cap N'$. We show that
$\mbox{Im}f'=\mbox{Im}f \cap N'$. It is obvious that
$\mbox{Im}f'\subseteq\mbox{Im}f \cap N'$. For any $y\in \mbox{Im}f
\cap N'$, there exists $x=x'+x''\in N$ such that
$y=f(x)=f'(x')+\lambda_1(x'')+f''(x'')$ with
$f'(x')+\lambda_1(x'')\in N'$ and $f''(x'')\in N''$. So
$f(x)=f'(x')+\lambda_1(x'')$ and $f''(x'')=0.$ Since
$\mbox{Im}\lambda_1\subseteq \mbox{Im}f'$, there exists $z\in M'$
such that $\lambda_1(x'')=f'(z)$. Thus, $y=f(x)=f'(x'+z)$.
Therefore, $y\in\mbox{Im}f'$. So we have $\ker g'=\ker g\cap
N'=\mbox{Im}f \cap N'=\mbox{Im}f'.$

Next, we show that the third row is exact.

We prove $\mbox{Im}f''=\mbox{Im}f \cap N''$ firstly. In fact, $
\mbox{Im}f \cap N''\subseteq\mbox{Im}f''$ is obvious. Conversely,
for any $y\in \mbox{Im}f''$, there exists $x\in M''$ such that
$f''(x)=y$, we get $f(x)=\lambda_1(x)+f''(x)$. Since
$\mbox{Im}\lambda_1\subseteq \mbox{Im}f'$, there exists $z\in M'$
such that $f'(z)=\lambda_1(x)$. So $f(x-z)=
\lambda_1(x)+f''(x)-f'(z)=f''(x)=y$. Therefore, $y\in \mbox{Im}f
\cap N''.$ This proves $\mbox{Im}f''=\mbox{Im}f \cap N''$.

On the other hand, we show that $\ker g''=\ker g\cap N''$. It is
obvious that $\ker g\cap N''\subseteq \ker g''$.  For any $y\in \ker
g''$, we have $y \in N''$ and $g''(y)=0$. Since
$\mbox{Im}\lambda_2\subseteq \mbox{Im}g'$, there exists $z\in N'$
such that $g'(z)=\lambda_2(y)$. We get
$g(z-y)=g'(z)-\lambda_2(y)-g''(y)=0$. Since $\ker g=\mbox{Im}f$,
there exists $v=v'+v''\in M$ such that
$f(v)=f'(v')+\lambda_1(v'')+f''(v'')=z-y$. Clearly,
$f'(v')+\lambda_1(v'')=z$ and $f''(v'')=-y$. Since
$\mbox{Im}\lambda_1\subseteq \mbox{Im}f'$, there exists $w\in M'$
such that $f'(w)=\lambda_1(v'')$. Thus, $f'(v'+w)=z$. We get
$\lambda_2(y)=g'(z)=g'f'(v'+w)=0$ by the exactness of the first row.
So $g(y)=\lambda_2(y)+g''(y)=0$. Therefore, $y\in \ker g\cap N''$.

Hence, we get $\ker g''=\ker g\cap N''=\mbox{Im}f \cap
N''=\mbox{Im}f''$.
\end{proof}

With preparations above, we turn our attention to a kind of
bi-Koszul modules.

Let $M$ be a bi-Koszul module over $A$ with the minimal free
resolution:
$$
\mathcal{Q}: \quad \cdots\rightarrow Q_{3n+2}
\stackrel{\partial_{3n+2}}\rightarrow
Q_{3n+1}\stackrel{\partial_{3n+1}}\rightarrow Q_{3n}
\stackrel{\partial_{3n}}\rightarrow\cdots \rightarrow
Q_2\stackrel{\partial_2}\rightarrow Q_1
\stackrel{\partial_1}\rightarrow Q_0\stackrel{\varepsilon}
\rightarrow M \rightarrow 0
$$
where  $Q_{3n+2}=A(-2nd-d)^{p_{3n+2}}\oplus
A(-2nd-d-1)^{q_{3n+2}}$.

We say that $M$ is {\it decomposable\/} in case, for each $m\geq 1$,
the matrix representation $F_m$ of $\partial_m$ with the form
$\left(\begin{array}{cc} F_m' & 0\\U_m &F_m''\\\end{array} \right)$
satisfying
\begin{enumerate}
\item $F_m$ is admissible (if the resolution is ending at $Q_m$ with $m\ne 3n+2$,
$F_m=(F_m'\; 0)$ is permitted),
\item $F_{m+1}F_m$ is consistent according to the block multiplication, and
\item the number of rows of $F'_{3n+2}$ is equal to $p_{3n+2}$.
\end{enumerate}

\begin{theorem}\label{thm7}
Let $A$ be a connected noetherian graded algebra, and
$M$ a decomposable bi-Koszul module defined as above. Then the
resolution $\mathcal{Q}$ can be decomposed into a direct sum of two
pure resolutions. Moreover, there exists a short exact sequence
$0\rightarrow M'\rightarrow M\rightarrow M''\rightarrow0$ such that
$M'$ and $ M''$ are $ \delta$-Koszul modules.
\end{theorem}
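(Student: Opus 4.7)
The plan is to apply Lemma \ref{le8} iteratively along the resolution $\mathcal{Q}$. The decomposability hypothesis is tailored for this: for each consecutive pair $Q_{m+1} \xrightarrow{\partial_{m+1}} Q_m \xrightarrow{\partial_m} Q_{m-1}$, the matrices $F_{m+1}$ and $F_m$ are admissible by condition (1), and condition (2) ensures that the product $F_{m+1}F_m$ respects the block multiplication, so that the splits of $Q_m$ induced from the two sides coincide. Hence Lemma \ref{le8} applies at every stage and produces a short exact sequence of complexes
\[
0 \to \mathcal{Q}' \to \mathcal{Q} \to \mathcal{Q}'' \to 0,
\]
where $\mathcal{Q}'$ (resp.\ $\mathcal{Q}''$) has modules $Q'_m$ (resp.\ $Q''_m$) and differentials $F'_m$ (resp.\ $F''_m$), and every column is split exact. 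Exactness of the rows in Lemma \ref{le8} propagates the exactness of $\mathcal{Q}$ to exactness of both $\mathcal{Q}'$ and $\mathcal{Q}''$ at every positive position; setting $M' := \operatorname{coker}(\partial'_1)$ and $M'' := \operatorname{coker}(\partial''_1)$, these become free resolutions of $M'$ and $M''$. They remain \emph{minimal} because every entry of $F'_m$ or $F''_m$ appears as an entry of $F_m$ and hence lies in the graded Jacobson radical $J$.

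Purity of the decomposed resolutions is guaranteed by condition (3): for $m = 3n+2$, $F'_{3n+2}$ has exactly $p_{3n+2}$ rows corresponding to the summand $A(-2nd-d)^{p_{3n+2}}$, so $Q'_{3n+2} \cong A(-2nd-d)^{p_{3n+2}}$ and $Q''_{3n+2} \cong A(-2nd-d-1)^{q_{3n+2}}$ are both pure; for $m \not\equiv 2 \pmod{3}$, $Q_m$ is already pure and both summands inherit the single degree. Writing $\delta'(m)$ and $\delta''(m)$ for the smaller and larger coordinates of $\varDelta(m)$ (which coincide when $m \not\equiv 2 \pmod{3}$), these are the generator-degree functions of the minimal pure resolutions $\mathcal{Q}'$ and $\mathcal{Q}''$. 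In the sense of \cite{GM}, $M'$ and $M''$ are therefore $\delta$-Koszul modules.

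For the short exact sequence of modules, the short exact sequence of complexes $0 \to \mathcal{Q}' \to \mathcal{Q} \to \mathcal{Q}'' \to 0$ induces a long exact sequence in homology. Since all three complexes are resolutions, their higher homologies vanish and the long exact sequence collapses to $0 \to M' \to M \to M'' \to 0$, as required.

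The step I expect to be the main obstacle is not any single logical move but the bookkeeping needed to show that the iterative application of Lemma \ref{le8} is globally coherent: the split of $Q_m$ derived from the pair $(\partial_{m+1}, \partial_m)$ must coincide with the one derived from $(\partial_m, \partial_{m-1})$. This compatibility is exactly what condition (2) of the decomposability definition encodes, so the hypotheses force the conclusion; nevertheless, making the inductive matrix tracking precise across the whole resolution, and confirming that $F'_m F'_{m-1} = 0$ and $F''_m F''_{m-1} = 0$ in particular, will be the most technical portion of the argument.
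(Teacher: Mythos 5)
Your proposal is correct and follows essentially the same route as the paper: iterate Lemma \ref{le8} along $\mathcal{Q}$ using admissibility and the block-compatibility condition (2) to split every $Q_m$, read off purity of $\mathcal{Q}'$ and $\mathcal{Q}''$ from condition (3), and take $M'$, $M''$ as the cokernels at degree $0$. The only (harmless) difference is that you obtain $0\to M'\to M\to M''\to 0$ from the homology long exact sequence of the termwise-split short exact sequence of complexes, where the paper instead checks exactness of the augmented rows by a direct diagram chase ``as in Lemma \ref{le8}''; you also supply the minimality remark explicitly, and omit only the paper's small edge case of a resolution terminating at $Q_m$ with $m\ne 3n+2$.
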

\begin{proof}
With the assumption, applying the lemmas \ref{le7} and \ref{le8} to
each exact sequence $Q_{m+1}\stackrel{\partial_{m+1}}\rightarrow Q_m
\stackrel{\partial_m}\rightarrow Q_{m-1}$ produces two exact
sequences
$$
\cdots\rightarrow Q'_m\stackrel{\partial'_m} \rightarrow \cdots
\rightarrow Q'_2\stackrel{\partial'_2}\rightarrow Q'_1
\stackrel{\partial'_1}\rightarrow Q'_0,
$$
and
$$
\cdots\rightarrow Q''_m\stackrel{\partial''_m} \rightarrow\cdots
\rightarrow Q''_2\stackrel{\partial''_2}\rightarrow Q''_1
\stackrel{\partial''_1}\rightarrow Q''_0,
$$
where both $Q'_m$ and $Q''_m$ are pure, moreover, $Q_m=Q'_m\oplus
Q''_m$. Denote that
$$
M'=\varepsilon(Q'_0),\;\;\; M''=\pi\epsilon(Q''_0),
$$
where $\pi$ is the canonical surjective morphism from $M$ to $M/M'$.
We have the commutative diagram:
$$
\begin{array}{ccclcrccc}
&  & 0 &  & 0 &  & 0 &  &   \\
&  & \downarrow  &  & \downarrow  &   &  \downarrow &  &  \\
&  & Q'_1 & \stackrel{\partial'_1}\to & Q'_0 &\stackrel{\epsilon'} \to& M'& \to  &  0 \\
&  &  \downarrow  &  &  \downarrow &  &   \downarrow  &  &  \\
&  & Q_1 & \stackrel{\partial_1}\to &Q_0 & \stackrel{\varepsilon}\to & M & \to &  0 \\
&  & \downarrow &  & \downarrow &  & \downarrow&  &  \\
&  & Q''_1 & \stackrel{\partial''_1}\to &  Q''_0 & \stackrel{\varepsilon''}\to & M'' & \to & 0  \\
&  & \downarrow &  & \downarrow &  & \downarrow &  &  \\
&  & 0 &  & 0 &  & 0 &  &
\end{array}
$$
with $\varepsilon'=\varepsilon\mid _{Q'_0}$ and
$\varepsilon''=\pi\varepsilon\mid _{Q''_0}$. It is clear that
$\varepsilon''$ is an epimorphism. The first and third rows are
exact by the similar proof of Lemma \ref{le8}.

If the resolution $\mathcal{Q}$ is ending at $Q_m$ with $m\ne 3n+2$,
and $F_m$ has the form $(F_m'\; 0)$, we can get the similar proof by
taking $Q_m'=Q_m,\, Q_m''=0$.
\end{proof}

\begin{remark}\label{remk}
We may describe $M'$ and $M''$ above more precisely
by their resolutions:
$$
\mathcal{Q'}: \cdots\rightarrow Q'_m\stackrel{\partial'_m}
\rightarrow \cdots \rightarrow
Q'_2\stackrel{\partial'_2}\rightarrow Q'_1
\stackrel{\partial'_1}\rightarrow Q'_0\stackrel{\varepsilon'}
\rightarrow M' \rightarrow 0
$$
and
$$
\mathcal{Q''}: \cdots\rightarrow Q''_m \stackrel{\partial''_m}
\rightarrow \cdots \rightarrow
Q''_2\stackrel{\partial''_2}\rightarrow Q''_1
\stackrel{\partial''_1}\rightarrow Q''_0\stackrel{\varepsilon''}
\rightarrow M'' \rightarrow 0
$$
where $Q'_m$ is generated in degree
$$
\delta'(m)=\left\{\begin{array}{llll} 2nd, & \mbox{if $m = 3n$,}\\
2nd+1,  &  \mbox{if $m = 3n+1$,}\\2nd+d,   & \mbox{if $m = 3n+2$,}
\end{array}\right.
$$
and $Q''_m$ is generated in degree
$$
\delta''(m)=\left\{\begin{array}{llll}2nd,  &  \mbox{if $m = 3n$,}\\
2nd+1,  &  \mbox{if $m = 3n+1$,}\\2nd+d+1,   & \mbox{if $m = 3n+2$.}
\end{array}
\right.
$$

Both of them are one kind of $\delta$-Koszul modules introduced in
\cite{GM}, but here the notion is defined over an arbitrary graded
algebra.
\end{remark}

\begin{corollary} In the case of Theorem \ref{thm7}, we get,
for all $m\geq 1$, $\mathsf{E}^m(M)=\mathsf{E}^m(M')\oplus
\mathsf{E}^m(M'')$, where both $\mathsf{E}^m(M')$ and
$\mathsf{E}^m(M'')$ are pure.
\end{corollary}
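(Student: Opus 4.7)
The plan is to deduce the corollary almost immediately from the construction carried out in Theorem \ref{thm7}, once we confirm that the auxiliary resolutions $\mathcal{Q}'$ of $M'$ and $\mathcal{Q}''$ of $M''$ are in fact \emph{minimal}, so that they can be used to compute $\mathsf{E}^{*}(M')$ and $\mathsf{E}^{*}(M'')$.

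First, I would observe that minimality is forced by Remark \ref{remk}: the generating degrees $\delta'(m)$ and $\delta''(m)$ are each strictly increasing in $m$ (for any $d\ge 2$ one checks $\delta'(m+1)>\delta'(m)$ and $\delta''(m+1)>\delta''(m)$ by inspecting the three residues of $m$ mod $3$). Since the algebra $A$ is connected and each $Q'_m$ (resp.\ $Q''_m$) is generated in a single degree strictly smaller than the single generating degree of $Q'_{m+1}$ (resp.\ $Q''_{m+1}$), the images of $\partial'_{m+1}$ and $\partial''_{m+1}$ automatically land in $JQ'_m$ and $JQ''_m$. Hence $\mathcal{Q}'$ and $\mathcal{Q}''$ are minimal graded projective resolutions of $M'$ and $M''$.

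Next I would compute the Ext groups from these minimal resolutions. Because $\mathcal{Q}$ is itself minimal, the differentials of the complex $\underline{\Hom}_A(\mathcal{Q},k)$ vanish, so $\mathsf{E}^m(M)=\underline{\Hom}_A(Q_m,k)$ for every $m\ge 0$. The $A$-module decomposition $Q_m=Q'_m\oplus Q''_m$ (produced in the proof of Theorem \ref{thm7}) then yields
\[
\mathsf{E}^m(M)=\underline{\Hom}_A(Q'_m,k)\oplus\underline{\Hom}_A(Q''_m,k)=\mathsf{E}^m(M')\oplus\mathsf{E}^m(M''),
\]
which is exactly the desired splitting. Purity of each summand is then read off from Lemma \ref{le}: $Q'_m$ is generated in the single degree $\delta'(m)$, so $\mathsf{E}^m(M')$ is supported in $\{\delta'(m)\}$, and likewise $\mathsf{E}^m(M'')$ is supported in $\{\delta''(m)\}$.

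The one subtlety — what I would flag as the main obstacle — is that the decomposition of $\mathcal{Q}$ in Theorem \ref{thm7} is not literally a direct sum of chain complexes: the admissible form $F_m=\bigl(\begin{smallmatrix}F'_m & 0\\ U_m & F''_m\end{smallmatrix}\bigr)$ allows $\partial_m$ to mix $Q''_m$ into $Q'_{m-1}$. This mixing is, however, invisible for the Ext computation precisely because the shared resolution $\mathcal{Q}$ is minimal: after applying $\underline{\Hom}_A(-,k)$ all the differentials die, so only the module-level splitting $Q_m=Q'_m\oplus Q''_m$ matters. Once this is noted, the corollary falls out with no further work.
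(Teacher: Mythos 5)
Your proposal is correct and follows the route the paper intends: the corollary is stated without proof as an immediate consequence of Theorem \ref{thm7} and Remark \ref{remk}, namely that $Q_m=Q'_m\oplus Q''_m$ with each summand generated in the single degree $\delta'(m)$ or $\delta''(m)$, so applying $\underline{\Hom}_A(-,k)$ to the minimal resolution splits $\mathsf{E}^m(M)$ and Lemma \ref{le} gives purity. Your explicit verification that $\mathcal{Q}'$ and $\mathcal{Q}''$ are themselves minimal (via the strict monotonicity of $\delta'$ and $\delta''$ for $d\ge 2$), and your observation that the off-diagonal blocks $U_m$ are killed by $\underline{\Hom}_A(-,k)$, merely make explicit what the paper leaves implicit.
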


\begin{exa}
Set
$$
A=\frac{k\langle x, y, z, w\rangle}{(yx, z^2y, wz)}.
$$
The trivial module $_Ak$ has the minimal resolution:
$$
0\rightarrow A(-5)  \rightarrow A(-4)^2 \rightarrow
A(-2)^2\oplus A(-3) \rightarrow A(-1)^4 \rightarrow A
\rightarrow k \rightarrow 0,
$$
so $A$ is a bi-Koszul algebra. Consider $M:=\dfrac{A\oplus
A}{\big((x,0), (0,y)\big)}$, it is a bi-Koszul module because it has
the minimal resolution:
$$
0 \rightarrow A(-5)  \stackrel{M_{4}}\rightarrow A(-4)^2
\stackrel{M_3}\rightarrow A(-2)\oplus A(-3)
\stackrel{M_2}\rightarrow A(-1)^2 \stackrel{M_1}\rightarrow
A^2 \stackrel{\varepsilon} \rightarrow M \rightarrow 0.
$$
where $M_4=\left(\begin{array}{cc} w & 0 \\\end{array} \right)$,
$M_3=\left(\begin{array}{cc} z^2 & 0 \\ 0 & w \\\end{array}
\right)$, $M_2=\left(\begin{array}{cc} y & 0 \\ 0 & z^2\\
\end{array}\right)$ and $M_1=\left(\begin{array}{cc} x & 0\\
0 & y \\\end{array}\right)$. So by  Theorem \ref{thm7} we have the
minimal resolutions:
$$
0 \rightarrow A(-5)  \rightarrow A(-4)  \rightarrow A(-2)
\rightarrow A(-1)\rightarrow A \rightarrow A/(x) \rightarrow 0,
$$
and
$$
0 \rightarrow A(-4)  \rightarrow A(-3)  \rightarrow A(-1)
\rightarrow A \rightarrow A/(y) \rightarrow 0.
$$
\end{exa}

\medskip

We end this article by constructing a strongly bi-Koszul algebra
based on the notion of free products and direct sums,  as well as
the existence of $\delta$-Koszul algebra mentioned above.

For the details of the free product $A\sqcup A'$ and direct sum
$A\sqcap A'$, we refer to (\cite{PP}, Chapter 3), from which the
following results will be used in our construction.
\begin{enumerate}
\item $E(A\sqcup A')=E(A)\sqcap E(A')$,
\item $(E(A)\sqcap E(A'))_i=\left\{\begin{array}{llll}
k, & \mbox{if $i=0$,}\\
E^i(A)\oplus E^i(A'), &  \mbox{if $i>0$,}
\end{array}\right.$
\item $E^{>0}(A)\cdot E^{>0}(A')=E^{>0}(A')\cdot E^{>0}(A)=0$.
\end{enumerate}

Now suppose that $A$ and $A'$ are two $\delta$-Koszul algebras, the
trivial modules $_Ak$ and $_{A'}k$ have their minimal resolutions
whose degree distributions obey the resolution maps $\delta'$ and
$\delta''$ mentioned in Remark \ref{remk}. We know that
$E^i(A)E^{3n}(A)=E^{i+3n}(A)$ and $E^i(A')E^{3n}(A')=E^{i+3n}(A')$
for $i=0, 1, 2$ and $n\geq 0$ by Lemma \ref{gmmz} .

Denote $B=A\sqcup A'$, we have $E^{3n+i}(B) =  E^{3n+i}(A)\oplus
E^{3n+i}(A')$. On the other hand, $E^i(B)E^{3n}(B)=(E^i(A)\oplus
E^i(A'))(E^{3n}(A)\oplus E^{3n}(A'))=E^i(A)E^{3n}(A)\oplus
E^i(A')E^{3n}(A')$ for $i=0, 1, 2$ and $n\geq 0$. Hence, $B$ is a
bi-Koszul algebra and $E^i(B)E^{3n}(B)=E^{3n+i}(B)$ for $i=0, 1, 2$
and $n\geq 0$. So $B$  is a strongly bi-Koszul algebra.

The above shows that one may get a strongly bi-Koszul algebra  by
free products of some graded algebras with pure resolutions.
Unfortunately, the constructing can not product any non-strongly
bi-Koszul algebra.

\medskip
We leave the following question.

\textsf{Question.} Is there a bi-Koszul algebra that is not
strongly? Or equivalently, must the bi-Koszul algebras be strongly?

\vskip5mm

\textit{Acknowledgments.} The authors thank Roland Berger and Nicole
Snashall for their useful comments and reminding of the papers
\cite{BBK, GS} in the development of Koszul algebras.

\vskip1cm
\bibliographystyle{amsplain}

\end{document}